\journal{Journal of Computational Physics}
\def\d{\delta} 
\def\e{{\epsilon}}
\def\norm#1{\|#1\|}
\newcommandx{\unsure}[2][1=]{\todo[linecolor=red,backgroundcolor=red!25,bordercolor=red,#1]{#2}}
\newcommandx{\change}[2][1=]{\todo[linecolor=blue,backgroundcolor=blue!25,bordercolor=blue,#1]{#2}}
\newcommandx{\info}[2][1=]{\todo[linecolor=OliveGreen,backgroundcolor=OliveGreen!25,bordercolor=OliveGreen,#1]{#2}}
\newcommandx{\improvement}[2][1=]{\todo[linecolor=Plum,backgroundcolor=Plum!25,bordercolor=Plum,#1]{#2}}
\newcommandx{\thiswillnotshow}[2][1=]{\todo[disable,#1]{#2}}
\newtheorem{theorem}{Theorem}
\newtheorem{corollary}{Corollary}
\newtheorem{lemma}{Lemma}
\newtheorem{remark}{Remark}
\begin{document}

\newcommand{\TODO}[1]{ \fbox{\parbox{3in}{\bf TODO: #1}}}

\newcommand{\grbf}[1] {\mbox{\boldmath${#1}$\unboldmath}}
\newcommand{\gbf}[1] {\mathbf{#1}}

\newcommand{\beq} {\begin{equation}}
\newcommand{\eeq} {\end{equation}}
\newcommand{\bdm} {\begin{displaymath}}
\newcommand{\edm} {\end{displaymath}}
\newcommand{\bit}{\begin{itemize}}
\newcommand{\eit}{\end{itemize}}
\newcommand{\bde}{\begin{description}}
\newcommand{\ede}{\end{description}}
\newcommand{\bce}{\begin{center}}
\newcommand{\ece}{\end{center}}
\newcommand{\ben} {\begin{enumerate}}
\newcommand{\een} {\end{enumerate}}
\newcommand{\bea} {\begin{eqnarray}}
\newcommand{\eea} {\end{eqnarray}}
\newcommand{\barr} {\begin{array}}
\newcommand{\earr} {\end{array}}
\newcommand{\bean} {\begin{eqnarray*}}
\newcommand{\eean} {\end{eqnarray*}}
\newcommand{\edoc} {

\begin{frontmatter}

\title{An Improved Iterative HDG Approach for Partial
   Differential Equations\tnoteref{t1}}
   \tnotetext[t1]{This research was
    partially supported by DOE grants DE-SC0010518 and
    DE-SC0011118, NSF Grants NSF-DMS1620352 and RNMS (Ki-Net) 1107444, ERC Advanced Grant DYCON: Dynamic Control. We are grateful for the supports.}

    \author[srk]{Sriramkrishnan Muralikrishnan}
    \author[Binh]{Minh-Binh Tran}
    \author[srk,Tan]{Tan Bui-Thanh}
    \address[srk]{Department of Aerospace Engineering and Engineering Mechanics,\\ The University of Texas at Austin, TX 78712, USA.} 
    \address[Tan]{The Institute for Computational Engineering
    \& Sciences,\\ The University of Texas at Austin, Austin, TX 78712,
    USA.}
    \address[Binh]{Department of Mathematics,  University of Wisconsin, Madison, WI 53706, USA.} 

\begin{abstract}
We propose and analyze an iterative high-order hybridized
discontinuous Galerkin (iHDG) discretization for linear partial
differential equations. We improve our previous work (SIAM
J. Sci. Comput. Vol. 39, No. 5, pp. S782--S808) in several directions:
1) the improved iHDG approach converges in a finite number of
iterations for the scalar transport equation; 2) it is unconditionally
convergent for both the linearized shallow water system and the
convection-diffusion equation; 3) it has improved stability and
convergence rates; 4) we uncover a relationship between the number of
iterations and time stepsize, solution order, meshsize and the
equation parameters. This allows us to choose the time stepsize such
that the number of iterations is approximately independent of the solution order and
the meshsize; and 5) we provide both strong and weak scalings of the
improved iHDG approach up to $16,384$ cores. A connection between
iHDG and time integration methods such as parareal and
implicit/explicit methods are discussed.  Extensive numerical
results are presented to verify the theoretical findings.

\end{abstract}

\begin{keyword}

Iterative solvers \sep Schwarz methods \sep Hybridized Discontinuous Galerkin methods \sep transport equation \sep shallow water equation \sep convection-diffusion equation

\end{keyword}

\end{frontmatter}


\section{Introduction}

Originally developed \cite{ReedHill73} for the neutron transport
equation, first analyzed in \cite{LeSaintRaviart74,
  JohnsonPitkaranta86}, the discontinuous Galerkin (DG) method has
been studied extensively for virtually all types of partial
differential equations (PDEs)
\cite{douglas1976interior,wheeler1978elliptic,arnold1982interior,
  CockburnKarniadakisShu00, arnold2002unified}. This is due to the fact that DG combines
advantages of finite volume and finite element methods. As such, it is
well-suited to problems with large gradients including shocks and with
complex geometries, and large-scale simulations demanding parallel
implementations.
In spite of these advantages, DG methods for
steady state and/or time-dependent problems that require implicit
time-integrators are more expensive in comparison to other existing
numerical methods since they typically have many more (coupled)
unknowns.

As an effort to mitigate the computational expense associated with DG
methods, the hybridized (also known as hybridizable) discontinuous
Galerkin (HDG) methods are introduced for various types of PDEs
including Poisson-type equation \cite{CockburnGopalakrishnanLazarov09,
  CockburnGopalakrishnanSayas10, KirbySherwinCockburn12,
  NguyenPeraireCockburn09a, CockburnDongGuzmanEtAl09,
  EggerSchoberl10}, Stokes equation \cite{CockburnGopalakrishnan09,
  NguyenPeraireCockburn10}, Euler and Navier-Stokes equations, wave
equations \cite{NguyenPeraireCockburn11, MoroNguyenPeraire11,
  NguyenPeraireCockburn11b, LiLanteriPerrrussel13,
  NguyenPeraireCockburn11a, GriesmaierMonk11, CuiZhang14}, to name a
few. In \cite{Bui-Thanh15, Bui-Thanh15a, bui2016construction}, we have
proposed an upwind HDG framework that provides a unified and a
systematic construction of HDG methods for a large class of PDEs.
We note that the weak Galerkin methods
in \cite{WangYe13, WangYe14, ZhaiZhangWang15, MuWangYe14}
share many similar advantages with HDG.

Besides the usual DG volume unknown, HDG methods introduce extra
single-valued trace unknowns on the mesh skeleton to reduce the number
of coupled degrees of freedom and to promote further parallelism. This
is accomplished via a Schur complement approach  in which the volume unknowns on each elements
are independently eliminated in parallel to provide a system of equations involving only the trace unknowns.
Moreover, the trace system is substantially smaller and
sparser compared to a standard DG linear system.
Once the trace unknowns are solved for, the volume unknowns can
be recovered in an element-by-element fashion, completely independent
of each other.  For small and medium sized problems the above approach
is popular. However, for practically large-scale applications,
where complex and high-fidelity simulations involving features with a
large range of spatial and temporal scales are necessary, the trace
system is still a bottleneck. In this case, matrix-free iterative
solvers/preconditioners \cite{Brown10,crivellini2011implicit,knoll2004jacobian} which converge in reasonably
small number of iterations are required.



Schwarz-type domain decomposition methods (DDMs) have become popular during the last three decades as they provide efficient algorithms to parallelize and to solve PDEs
\cite{Lions:1987:OSA,Lions:1989:OSA,Lions:1990:OSA}. Schwarz waveform relaxation methods and optimized Schwarz
methods
\cite{Halpern:OSM:2009,GanderGouarinHalpern:2011:OSW,GanderHajian:2015:ASM,Binh1,tran2011parallel,tran2013overlapping,tran2014behavior}
are among the most important subclasses of DDMs since they
can be adapted to the underlying physics, and thus lead to powerful
parallel solvers for challenging problems. While, scalable iterative solvers/preconditioners for the statically condensed
trace system can be developed \cite{CockburnDuboisGopalakrishnanEtAl13}, DDMs and HDG have a natural connection 
which can be exploited to create efficient parallel solvers. We have
developed and analyzed one such solver namely iterative HDG (iHDG) in
our previous work \cite{iHDG} for elliptic, scalar and system of
hyperbolic equations.  Independent and similar efforts for elliptic and
parabolic equations have been proposed and analyzed in
\cite{gander2014block,GanderHajian:2015:ASM,gander2016analysis}.

In the following, we discuss in section \secref{upwindhdg} an upwind
HDG framework \cite{Bui-Thanh15} for a general class of PDEs and also
our notations used in this paper.
The iHDG algorithm and significant
improvements over our previous work \cite{iHDG} are explained in section
\secref{iHDG}. The convergence of the new iHDG algorithm for the scalar and for system of hyperbolic PDEs is proved
in section \secref{iHDG-hyperbolic} using an energy approach.  In
section \secref{iHDG-convection-diffusion} we applied the iHDG approach for the convection-diffusion
PDE considered in the first order form. The convergence and
the scaling of the number of iHDG iterations with meshsize and solution order are derived.
Section \secref{numerical} presents various steady and time dependent
examples, in both two and three spatial dimensions, to support the
theoretical findings. We also present both strong and weak scaling results of our algorithm up to
16,384 cores in section \secref{numerical}. We finally conclude the paper in section
\secref{conclusions}.

\section{Upwind HDG framework}
\seclab{upwindhdg}
In this section we briefly review the upwind HDG framework for a
general system of linear PDEs and introduce necessary notations. To begin, let us consider the following
system of first order PDEs
\begin{equation}
\eqnlab{LPDE}
\sum_{k=1}^{\d}\partial_k\F_k\LRp{\ub} + \C\ub :=
\sum_{k=1}^{\d}\partial_k \LRp{\A_k\ub} + \C \ub = \fb, \quad \text{ in } \Omega,
\end{equation}
where $\d$ is the spatial dimension (which, for the clarity of the
exposition, is assumed to be $\d = 3$ whenever a particular value of
the dimension is of concern, but the result is also valid for $d =
\LRc{1,2}$), $\F_k$ the $k$th component of the flux vector (or tensor)
$\F$, $\ub$ the unknown solution with values in $\R^m$, and $\fb$ the
forcing term. For simplicity, we assume that the matrices $\A_k$ and $\C$ are 
continuous across $\Omega$. Here, $\partial_k$
stands for the $k$-th partial derivative and by the subscript $k$ we
denote the $k$th component of a vector/tensor. We shall employ
HDG to discretize 
\eqnref{LPDE}. To that end, let us start with the
notations and conventions.

We partition $\Omega \in \R^\d$, an  open and bounded domain,  into
$\Nel$ non-overlapping elements $\Kj, j = 1, \hdots, \Nel$ with
Lipschitz boundaries such that $\Omega_h := \cup_{j=1}^\Nel \Kj$ and
$\overline{\Omega} = \overline{\Omega}_h$. The mesh size $h$ is defined as $h
:= \max_{j\in \LRc{1,\hdots,\Nel}}diam\LRp{\Kj}$. We denote the
skeleton of the mesh by $\Gh := \cup_{j=1}^\Nel \partial K_j$,
the set of all (uniquely defined) faces $\e$. We conventionally identify $\nm$ as the outward 
normal vector on the boundary $\pK$ of element $\K$ (also denoted as $\Km$) and $\np = -\nm$ as the outward normal vector of the boundary of a neighboring element (also denoted as $\Kp$). Furthermore, we use $\n$ to denote either $\nm$ or $\np$ in an expression that is valid for both cases, and this convention is also used for other quantities (restricted) on
a face $\e \in \Gh$.
   For  convenience, we denote by $\Ghb$ the sets of all boundary faces on $\pOmega$, by $\Gho := \Gh \setminus \Ghb$ the set of all interior faces, and $\pOmega_h := \LRc{\pK:\K \in \Omega_h}$.


For simplicity in writing we define $\LRp{\cdot,\cdot}_\K$ as the
$L^2$-inner product on a domain $\K \in \R^\d$ and
$\LRa{\cdot,\cdot}_\K$ as the $L^2$-inner product on a domain $\K$ if
$\K \in \R^{\d-1}$. We shall use $\nor{\cdot}_{\K} :=
\nor{\cdot}_{\Ltw}$ as the induced norm for both cases and the
particular value of $\K$ in a context will indicate the inner
product from which the norm is coming. We also denote the $\veps$-weighted
norm of a function $\u$ as $\nor{\u}_{\veps, \K} :=
\nor{\sqrt{\veps}\u}_{\K}$ for any positive $\veps$.  We shall use
boldface lowercase letters for vector-valued functions and in that
case the inner product is defined as $\LRp{\ub,\vb}_\K :=
\sum_{i=1}^m\LRp{\ub_i,\vb_i}_\K$, and similarly $\LRa{\ub,\vb}_\K :=
\sum_{i = 1}^m\LRa{\ub_i,\vb_i}_\K$, where $\m$ is the number of
components ($\ub_i, i=1,\hdots,\m$) of $\ub$.  Moreover, we define
$\LRp{\ub,\vb}_\Omega := \sum_{\K\in \Omega_h}\LRp{\ub,\vb}_\K$ and
$\LRa{\ub,\vb}_\Gh := \sum_{\e\in \Gh}\LRa{\ub,\vb}_\e$ whose
induced (weighted) norms are clear, and hence their definitions are
omitted. We  employ boldface uppercase letters, e.g. $\mb{L}$, to
denote matrices and tensors. 
We conventionally use $\ub$ ($\vb$ and $\ubh)$ for
the numerical solution and $\ub^e$ for the exact solution.

We denote by $\Poly^\p\LRp{\K}$ the space of polynomials of degree at
most $\p$ on a domain $\K$. Next, we introduce two discontinuous
piecewise polynomial spaces
\begin{align*}
\Vbh\LRp{\Omega_h} &:= \LRc{\vb \in \LRs{L^2\LRp{\Omega_h}}^m:
  \eval{\vb}_{\K} \in \LRs{\Poly^\p\LRp{\K}}^m, \forall \K \in \Omega_h}, \\
\Lambh\LRp{\Gh} &:= \LRc{\lambdab \in \LRs{\Lte}^m:
  \eval{\lambdab}_{\e} \in \LRs{\Poly^\p\LRp{\e}}^m, \forall \e \in \Gh},
\end{align*}
and similar spaces  $\VbhK$ and $\Lambhe$ on $\K$ and $\e$ by replacing $\Omega_h$ with
$\K$ and $\Gh$ with $\e$. For scalar-valued functions,
we denote the corresponding spaces as
\begin{align*}
\Vh\LRp{\Omega_h} &:= \LRc{\v \in L^2\LRp{\Omega_h}:
  \eval{\v}_{\K} \in \Poly^\p\LRp{\K}, \forall \K \in \Omega_h}, \\
\Lamh\LRp{\Gh} &:= \LRc{\lambda \in \Lte:
  \eval{\lambda}_{\e} \in \Poly^\p\LRp{\e}, \forall \e \in \Gh}.
\end{align*}


Following \cite{Bui-Thanh15}, an upwind HDG discretization for \eqnref{LPDE} 
in each element $\K$ involves
the DG local unknown $\ub$ and the extra ``trace'' unknown $\ubh$ such that
\begin{subequations}
\eqnlab{KHDG}
\begin{align}
\eqnlab{HDGlocal}
-\LRp{\F\LRp{\ub}, \Grad\vb}_\K + \LRa{\Fh\LRp{\ub,\ubh}\cdot
\n,\vb}_\pK + \LRp{\C\ub,\vb}_\K &= \LRp{\fb,\vb}_\K, \\
\eqnlab{HDGeqn}
\LRa{\jump{\Fh\LRp{\ub,\ubh} \cdot \n},\mub}_\e &= \mb{0}, \quad \forall \e \in \Gho, 
\end{align}
\end{subequations}
 where we have defined the ``jump'' operator for any quantity
 $\LRp{\cdot}$ as $\jump{\LRp{\cdot}} := \LRp{\cdot}^- +
 \LRp{\cdot}^+$. We also define the ``average'' operator
 $\average{\LRp{\cdot}}$ via $2\average{\LRp{\cdot}} :=
 \jump{\LRp{\cdot}}$. For simplicity, we have ignored the fact that equations \eqnref{HDGlocal}, \eqnref{HDGeqn} 
 must hold for
 all test functions $\vb \in \VbhK$ and $\mub \in \Lambh\LRp{\e}$ respectively. 
 This is implicitly understood throughout the paper. Here, the HDG flux is defined as
\begin{equation}
\eqnlab{RiemannFluxBoth}
\Fh\cdot \n = \F\LRp{\ub}\cdot \n + \Aa \LRp{\ub - \ubh},
\end{equation}
with the matrix $\A := \sum_{k = 1}^\d{\A_k\n_k} = \Rb \S \Rb^{-1}$, and
  $\Aa := \Rb \snor{\S}\Rb^{-1}$. Here $\n_k$ is the $k$th component of the outward normal vector 
  $\n$ and $\snor{\S}$ represents a matrix obtained by taking the absolute 
  value of the main diagonal of the matrix $\S$. We have  assumed that $\A$ admits an eigen-decomposition, and
  this is valid for a large class of PDEs of Friedrichs' type \cite{Friedrichs58}. 

  The typical procedure for computing HDG solution requires three
  steps. We first solve \eqnref{HDGlocal} for the local solution $\ub$
  as a function of $\ubh$. It is then substituted into the
  conservative algebraic equation \eqnref{HDGeqn} on the mesh skeleton
  to solve for the unknown $\ubh$. Finally, the local unknown $\ub$ is
  computed, as in the first step, using $\ubh$ from the second
  step. Since the number of trace unknowns $\ubh$ is less than the DG
  unknowns $\ub$ \cite{bui2016construction}, HDG is more advantageous.
  For large-scale problems, however, the trace system on the mesh
  skeleton could be large and iterative solvers are necessary. In the
  following we construct an iterative solver that takes advantage of
  the HDG structure and the domain decomposition method. 

\section{iHDG methods}
\seclab{iHDG} 

To reduce the cost of solving the trace system, our previous effort
\cite{iHDG} is to break the coupling between $\ubh$ and $\ub$ in
\eqnref{KHDG} by iteratively solving for $\ub$ in terms of $\ubh$ in
\eqnref{HDGlocal}, and $\ubh$ in terms of $\ub$ in \eqnref{HDGeqn}. We
name this approach  iterative HDG (iHDG) method, and now let us call it iHDG-I to
distinguish it from the approach developed in this paper. From a linear algebra
point of view, iHDG-I can be considered as a block Gauss-Seidel approach for the
system \eqnref{KHDG} that requires only independent element-by-element
and face-by-face local solves in each iteration. However, unlike
conventional Gauss-Seidel schemes which are purely algebraic, the
convergence of iHDG-I \cite{iHDG} does not depend on the ordering of
unknowns. From the domain decomposition point of view, thanks to the
HDG flux, iHDG can be identified as an optimal Schwarz method in which
each element is a subdomain. Using an energy approach, we have
rigorously shown the convergence of the iHDG-I for the transport
equation, the linearized shallow water equation and the
convection-diffusion equation \cite{iHDG}.  

Nevertheless, {\em a number of questions that need to be addressed for the
iHDG-I approach}. First, with the upwind flux it theoretically takes
infinite number of iterations to converge for the scalar transport
equation. Second, it is conditionally convergent for the linearized
shallow water system; in particular, it blows up for fine meshes and/or
large time stepsizes. Furthermore, we have not been able to estimate
the number of iterations as a function of time stepsize, solution
order, and  meshsize. Third, it is also conditionally convergent
for the convection-diffusion equation, especially in the diffusion-dominated regime. 

The iHDG approach constructed in this paper, which we call iHDG-II,
overcomes all the aforementioned shortcomings. In particular, it
converges in a finite number of iterations for the scalar transport
equation and is unconditionally convergent for both the linearized shallow
water system and the convection-diffusion equation. Moreover, compared to
our previous work \cite{iHDG}, we provide several additional findings:
1) we make a connection between iHDG and the parareal method, which reveals interesting similarities 
 and differences between the two methods;
2) we show that iHDG can be considered as a {\em locally implicit}
method, and hence being somewhat in between fully explicit and fully
implicit approaches; 3) for both the linearized shallow water system and the
convection-diffusion equation, using an asymptotic approximation, we
uncover a relationship between the number of iterations and time
stepsize, solution order, meshsize and the equation parameters. This allows us to choose the
time stepsize such that the number of iterations is approximately independent of the
solution order and the meshsize; 4) we show that iHDG-II has improved stability and
convergence rates over iHDG-I; and 5) we provide both strong and weak scalings
of the iHDG-II approach up to $16,384$ cores.

We now present a detailed construction of the iHDG-II approach.
We define the approximate  solution for the volume variables 
at the $\LRp{k+1}$th iteration using the local equation \eqnref{HDGlocal} as
\begin{align}
\nonumber
-\LRp{\F\LRp{\ubkp}, \Grad\vb}_\K + \LRa{\F\LRp{\ubkp}\cdot \n + \Aa(\ubkp-\ubh^{k,k+1}),\vb}_\pK \\ 
\eqnlab{iHDGlocal}
+ \LRp{\C\ubkp,\vb}_\K  = \LRp{\fb,\vb}_\K,
\end{align}
where the weighted trace $\Aa\ubh^{k,k+1}$ is computed from \eqnref{HDGeqn} using volume unknown
in element $\K$ at the $\LRp{k+1}$th iteration, i.e. $\LRp{\ubkp}^-$,
and volume solution of the neighbors at  the $\LRp{k}$th
iteration, i.e. $\LRp{\ubk}^+$:
\begin{align}
\nonumber
\LRa{2\Aa\ubh^{k,k+1},\mub}_\pK = &\LRa{\Aa\LRc{\LRp{\ubkp}^-+\LRp{\ubk}^+},\mub}_\pK \\ 
\eqnlab{iHDGtrace}
                          & +\LRa{\F\LRc{\LRp{\ubkp}^-}\cdot \n^{-} +\F\LRc{\LRp{\ubk}^+}\cdot \n^{+}, \mub}_\pK.
\end{align}

Algorithm \ref{al:DDMhyperbolic} summarizes the iHDG-II
approach. Compared to iHDG-I, iHDG-II improves the coupling between
$\ubh$ and $\ub$ while still avoiding intra-iteration communication
between elements. The trace $\ubh$ is double-valued during the course
of iterations for iHDG-II and in the event of convergence it becomes
single valued upto a specified tolerance. Another principal difference
is that while the well-posedness of iHDG-I elemental local solves is
inherited from the original HDG counterpart, {\em it has to be shown
  for iHDG-II}.  This is due to the new way of computing the weighted
trace in \eqnref{iHDGtrace} that involves $\ubkp$, and hence changing
the structure of the local solves.  Similar and independent work for
HDG methods for elliptic/parabolic problems have appeared in
\cite{gander2014block,GanderHajian:2015:ASM,gander2016analysis}. Here,
we are interested in pure hyperbolic equations/systems and
convection-diffusion equations.  Unlike existing matrix-based
approaches, our convergence analysis is based on an energy approach
that exploits the variational structure of HDG methods. Moreover we
provide, both rigorous and asymptotic, relationships between the
number of iterations and time stepsize, solution order, meshsize and
the equation parameters. We also make connection between our proposed iHDG-II approach
with parareal and time integration methods. Last but not least, our
 framework is more general: indeed it recovers the contraction factor
results in \cite{gander2014block} for elliptic
equations as one of the special cases.

\begin{algorithm}
  \begin{algorithmic}[1]
    \ENSURE Given initial guess $\ub^0$, compute the weighted trace $\Aa\ubh^{0,1}$ using \eqnref{iHDGtrace}.
      \vspace{-4mm}
      \WHILE{not converged} 
      \STATE Solve the local equation \eqnref{iHDGlocal} for $\ubkp$ using the weighted trace $\Aa\ubh^{k,k+1}$.
      \vspace{-4mm}
      \STATE Compute $\Aa\ubh^{k+1,k+2}$ using \eqnref{iHDGtrace}.
      \STATE Check convergence. If yes, {\bf exit}, otherwise {\bf set} $k = k+ 1$ and {\bf continue}.
    \ENDWHILE
  \end{algorithmic}
  \caption{The iHDG-II approach.}
  \label{al:DDMhyperbolic}
\end{algorithm}

\section{iHDG-II  for  hyperbolic PDEs}
\seclab{iHDG-hyperbolic} 

In this section we show that iHDG-II improves upon iHDG-I in many
aspects discussed in section \secref{iHDG}. The PDEs of interest are
(steady and time dependent) transport equation, and the linearized shallow water system \cite{iHDG}.


\subsection{Transport equation}
Let us start with the (steady) transport equation
\begin{subequations}
\eqnlab{transport}
\begin{align}
\betab \cdot \Grad \u^e &= f \quad \text{ in }
\Omega, \\
\u^e &= \g \quad \text{ on } \pOmega^-,
\end{align}
\end{subequations}
where $\pOmega^-$ is the inflow part of the boundary $\pOmega$, and
again $\u^e$ denotes the exact solution. {\it Note that $\betab$ is
  assumed to be continuous across the mesh skeleton}. 

Applying the iHDG-II algorithm \ref{al:DDMhyperbolic} to the upwind HDG
discretization \cite{iHDG} for \eqnref{transport} we obtain the
approximate solution $\u^{k+1}$ at the $(k+1)$th iteration restricted
on each element $\K$ via the following independent local solve:
\begin{multline}
    -\LRp{\LRp{\ukp}^-,\Div\LRp{\betab \v}}_\K  + \LRa{\betab\cdot\n^-\LRp{\ukp}^- + \snor{\betab\cdot \n}\LRc{\LRp{\ukp}^- -\uh^{k,k+1}},\v}_\pK \\ 
    =\LRp{f,\v}_\K,
\eqnlab{transportLocalk1}
\end{multline}
       where the weighted trace  $\snor{\betab\cdot \n}\uh^{k,k+1}$ is computed using information from the previous iteration and current iteration as
\begin{multline}
\eqnlab{transportLocale}
2\snor{\betab\cdot \n}\uh^{k,k+1} = \LRc{\betab\cdot \n^-\LRp{\ukp}^- + \betab\cdot\n^+\LRp{\uk}^+} \\ 
+\snor{\betab\cdot \n}\LRc{\LRp{\ukp}^-+\LRp{\uk}^+}.
\end{multline}

Next we study the convergence of the iHDG-II method
\eqnref{transportLocalk1}, \eqnref{transportLocale}. Since
\eqnref{transport} is linear, it is sufficient to show that the algorithm
converges to the zero solution for the homogeneous equation with zero forcing $\f=0$ and zero
boundary condition $\g=0$. Let us  define $\pKout$ as the outflow part of $\pK$,
i.e. $\betab\cdot \n^{-} > 0$
on $\pKout$, and $\pKin$ as the inflow
part of $\pK$, i.e. $\betab\cdot \n^{-}< 0$
on $\pKin$. First, we will prove the well-posedness of the local solver \eqnref{transportLocalk1}.
\begin{lemma}
    \lemlab{wellposednesstransport}
    Assume $-\Div \betab \ge \alpha > 0$, i.e. \eqnref{transport} is well-posed. Then the local solver \eqnref{transportLocalk1} of the iHDG-II algorithm for the transport equation is well-posed. 
\end{lemma}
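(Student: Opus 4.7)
The plan is to first reduce the local problem \eqnref{transportLocalk1} to a standard single-element upwind DG problem by simplifying the weighted trace, and then to establish injectivity (which, together with finite-dimensionality, gives well-posedness) via an energy estimate.

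\emph{Step 1: simplify the weighted trace.} I would substitute \eqnref{transportLocale} into \eqnref{transportLocalk1}. Using $\n^+ = -\n^-$, rewrite
\[
2\snor{\betab\cdot\n}\uh^{k,k+1} = \betab\cdot\n^{-}\LRs{\LRp{\ukp}^- - \LRp{\uk}^+} + \snor{\betab\cdot\n}\LRs{\LRp{\ukp}^- + \LRp{\uk}^+}.
\]
On $\pKout$ we have $\snor{\betab\cdot\n} = \betab\cdot\n^-$ and the two contributions collapse to $2\snor{\betab\cdot\n}\LRp{\ukp}^-$, while on $\pKin$ we have $\snor{\betab\cdot\n} = -\betab\cdot\n^-$ and they collapse to $2\snor{\betab\cdot\n}\LRp{\uk}^+$. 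Consequently,
\[
\snor{\betab\cdot\n}\uh^{k,k+1} = \begin{cases}\snor{\betab\cdot\n}\LRp{\ukp}^- & \text{on } \pKout, \\ \snor{\betab\cdot\n}\LRp{\uk}^+ & \text{on } \pKin.\end{cases}
\]
On $\pKout$ the trace term in \eqnref{transportLocalk1} then cancels and leaves $\LRa{\betab\cdot\n\,\LRp{\ukp}^-, v}_{\pKout}$, whereas on $\pKin$ it reduces to $-\LRa{\snor{\betab\cdot\n}\LRp{\uk}^+, v}_{\pKin}$, which is pure data.

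\emph{Step 2: reduce to injectivity.} Since \eqnref{transportLocalk1} is a square linear system on the finite-dimensional space $\Vh(\K)$, it is well-posed iff its homogeneous version has only the trivial solution. I would therefore set $\fb=0$ and $(\uk)^+=0$ on $\pKin$, so that the remaining equation reads, with $u := \LRp{\ukp}^-$,
\[
-\LRp{u,\Div(\betab v)}_\K + \LRa{\betab\cdot\n\, u, v}_{\pKout} = 0 \qquad \forall v \in \Vh(\K).
\]

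\emph{Step 3: energy estimate.} Taking $v=u$ and integrating by parts the volume term via $\Div(\betab u) = \betab\cdot\Grad u + (\Div\betab)u$ together with $-\LRp{u,\betab\cdot\Grad u}_\K = \tfrac{1}{2}\LRp{\Div\betab, u^2}_\K - \tfrac{1}{2}\LRa{\betab\cdot\n, u^2}_{\pK}$, the boundary contributions split across $\pKout$ and $\pKin$ and combine cleanly to
\[
\tfrac{1}{2}\LRa{\snor{\betab\cdot\n}, u^2}_{\pK} - \tfrac{1}{2}\LRp{\Div\betab, u^2}_\K = 0.
\]
Invoking the hypothesis $-\Div\betab \ge \alpha > 0$ gives $\tfrac{1}{2}\LRa{\snor{\betab\cdot\n}, u^2}_{\pK} + \tfrac{\alpha}{2}\nor{u}_\K^2 \le 0$, hence $u \equiv 0$ on $\K$, proving well-posedness.

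The only delicate point is the bookkeeping in Step 1: one must carefully track the sign of $\betab\cdot\n^-$ on $\pKin$ and $\pKout$ so that the weighted trace collapses correctly; once this is done, Steps 2 and 3 are the standard energy argument already used to justify the upwind HDG local solver in \cite{iHDG}, so no new machinery is needed.
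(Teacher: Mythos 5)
Your proof is correct and follows essentially the same route as the paper: both substitute the trace formula into the local equation, test with $\LRp{\ukp}^-$, apply the same integration-by-parts identity to obtain the energy balance $\nor{\LRp{\ukp}^-}^2_{-\Div\betab/2,\K}+\nor{\LRp{\ukp}^-}^2_{\snor{\betab\cdot\n}/2,\pK}=\text{(data from }\LRp{\uk}^+\text{)}$, and conclude injectivity on a finite-dimensional space. The only cosmetic difference is that you split $\pK$ into $\pKin$ and $\pKout$ to simplify the weighted trace before the energy estimate, whereas the paper keeps the neighbor term in the generic form $\frac{1}{2}\LRp{\betab\cdot\n^{+}+\snor{\betab\cdot\n}}\LRp{\uk}^+$, which vanishes on $\pKout$ and reduces to your data term on $\pKin$.
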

\begin{proof}
Taking $\v = \LRp{\ukp}^-$ in \eqnref{transportLocalk1}, substituting \eqnref{transportLocale} in \eqnref{transportLocalk1} and applying homogeneous forcing condition yield

\begin{multline}
\eqnlab{transportLocalb}
-\LRp{\LRp{\ukp}^-,\Div\LRc{\betab \LRp{\ukp}^-}}_\K  + \frac{1}{2}\LRa{\LRp{\betab\cdot\n^{-}+\snor{\betab\cdot \n}}\LRp{\ukp}^-,\LRp{\ukp}^-}_\pK \\ 
=\frac{1}{2}\LRa{\LRp{\betab\cdot\n^{+}+\snor{\betab\cdot \n}}\LRp{\uk}^+,\LRp{\ukp}^-}_\pK.
\end{multline}
 Since
 \begin{multline}\nonumber
     \LRp{\LRp{\ukp}^-,\nabla\cdot\LRc{\betab \LRp{\ukp}^-}}_{K}=\LRp{\LRp{\ukp}^-,\Div\betab \LRp{\ukp}^{-}}_{K}\\
     \nonumber
                                                                 +\LRp{\LRp{\ukp}^-,\betab \cdot \nabla  \LRp{\ukp}^-}_{K},
 \end{multline}
integrating by parts the second term on the right hand side
\begin{multline}\nonumber
    \LRp{\LRp{\ukp}^-,\nabla\cdot\LRc{\betab \LRp{\ukp}^-}}_{K} =\LRp{\LRp{\ukp}^-,\Div \betab \LRp{\ukp}^-}_{K}\\
    \nonumber
    -\LRp{\LRp{\ukp}^-,\nabla\cdot\LRc{\betab \LRp{\ukp}^-}}_{K} +\LRa{\betab\cdot \n^-\LRp{\ukp}^-, \LRp{\ukp}^-}_{\partial K},\nonumber
 \end{multline}
yields the following identity, after rearranging the terms
\begin{multline}
\eqnlab{Identity}
\LRp{\LRp{\ukp}^-,\nabla\cdot\LRc{\betab \LRp{\ukp}^-}}_{\K} 
=\LRp{\LRp{\ukp}^-,\frac{\Div \betab}{2} \LRp{\ukp}^-}_{\K} \\
 +\frac{1}{2}\LRa{\betab\cdot \n^-\LRp{\ukp}^- ,\LRp{\ukp}^-}_{\pK}.
\end{multline}
Using \eqnref{Identity} in \eqnref{transportLocalb} we get
\begin{multline}
\label{wellposednesstransport}
    \nor{\LRp{\ukp}^-}^2_{\frac{-\Div \betab}{2}, \K}+\nor{\LRp{\ukp}^-}^2_{|\betab\cdot\n|/2,\pK}= \\
\frac{1}{2}\LRa{\LRp{\betab\cdot\n^{+}+\snor{\betab\cdot \n}}\LRp{\uk}^+,\LRp{\ukp}^-}_\pK.
\end{multline}
In equation \eqref{wellposednesstransport} all the terms on the left hand side are positive. Since $\LRp{\uk}^+$ is the ``forcing'' for the local equation, by taking $\LRp{\uk}^+=0$ the only solution possible is $\LRp{\ukp}^-=0$ and hence the local solver is well-posed.
\end{proof}
Having proved the well-posedness of the local solver we can now proceed to prove the convergence of algorithm 
\ref{al:DDMhyperbolic} for the transport equation.
\begin{theorem}
\theolab{DDMConvergence} Assume $-\Div \betab \ge \alpha > 0$,
i.e. \eqnref{transport} is well-posed.  There exists $J \le \Nel$ such
that the iHDG-II algorithm for the homogeneous
transport equation converges to the HDG solution in $J$ iterations.
\end{theorem}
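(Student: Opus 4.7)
The plan is, by linearity of the iHDG-II system \eqnref{transportLocalk1}--\eqnref{transportLocale}, to reduce the theorem to showing that the iterates for the homogeneous problem (zero forcing, zero boundary data) vanish identically after at most $\Nel$ steps. Setting $e^k := \u^k - \u^{\text{HDG}}$, where $\u^{\text{HDG}}$ denotes the HDG solution, the error $e^k$ satisfies the same iHDG-II system with trivial data, so finite extinction of $\{e^k\}$ proves the claim.

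The cornerstone of the argument is an inspection of the weight $\betab\cdot\n^{+}+\snor{\betab\cdot\n}$ appearing on the right-hand side of the energy identity \eqref{wellposednesstransport} established in the proof of Lemma~\lemref{wellposednesstransport}. On $\pKout$, where $\betab\cdot\n^{-}>0$, this weight vanishes identically; on $\pKin$ it equals $2\snor{\betab\cdot\n}$. Hence the update $\LRp{\ukp}^{-}$ on an element $\K$ is driven only by the previous iterate $\LRp{\uk}^{+}$ at the inflow neighbors of $\K$, and if all these traces vanish (together with the trivial inflow boundary data), the well-posedness of the local solve forces $\LRp{\ukp}^{-}\equiv 0$ on $\K$.

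I would then induct on an upwind level decomposition of the mesh. Let $L_0$ consist of the elements whose inflow faces lie entirely on $\pOmega^{-}$, and for $j\ge 1$ let $L_j$ consist of the elements whose interior inflow neighbors all belong to $L_0\cup\cdots\cup L_{j-1}$. The base case yields $e^1\equiv 0$ on $L_0$. Inductively---using the auxiliary observation that once an element attains the zero iterate it remains zero in all later iterations, since its inflow forcing continues to vanish---the key observation gives $e^{j+1}\equiv 0$ on $L_j$. As soon as $\bigcup_{j=0}^{J-1}L_j=\Omega_h$, the global error $e^J$ vanishes identically, and since each nonempty level contains at least one new element, this occurs for some $J\le\Nel$.

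The main obstacle is justifying that the level sequence $\{L_j\}$ actually exhausts $\Omega_h$, equivalently that the directed graph of upwind adjacency on the mesh is acyclic. The hypothesis $-\Div\betab\ge\alpha>0$ rules out closed streamlines of $\betab$, and one expects this to preclude any cyclic upwind chain of elements on a conforming mesh---for instance by exhibiting a scalar that is strictly monotone along $\betab$ and therefore along any upwind chain of elements. Should such a geometric argument prove delicate in full generality, a clean fallback is the matrix viewpoint: in any admissible ordering consistent with the upwind flow, the iteration operator is block lower-triangular with a nilpotent off-diagonal block, which immediately yields convergence in at most $\Nel$ steps.
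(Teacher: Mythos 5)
Your argument is essentially the paper's own proof: the same energy identity from Lemma \lemref{wellposednesstransport}, the same observation that the weight $\betab\cdot\n^{+}+\snor{\betab\cdot\n}$ vanishes on $\pKout$ so that each local solve is driven only by the previous iterate on its inflow neighbours, and the same layer-by-layer sweep from the inflow boundary (your levels $L_j$ are the paper's sets $\mathcal{K}^{j+1}$). The one place you go beyond the paper --- asking whether the upwind levels actually exhaust $\Omega_h$, i.e.\ whether the element-level upwind adjacency graph is acyclic --- targets a step the paper dispatches with only ``since the number of elements is finite,'' so your instinct to supply a functional strictly monotone along $\betab$ is a legitimate strengthening rather than a deviation (though note your matrix-ordering fallback presupposes exactly the acyclicity it is meant to establish).
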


\begin{proof}
    Using \eqref{wellposednesstransport} from Lemma \lemref{wellposednesstransport} and $\betab\cdot \n^{+}> 0$ on $\pKin$, $\betab\cdot \n^{+}< 0$ on $\pKout$ we can write
\begin{equation}
\label{transportLocalc}
\nor{\LRp{\ukp}^-}^2_{\frac{-\Div \betab}{2}, \K}+\nor{\LRp{\ukp}^-}^2_{|\betab\cdot\n|/2,\pK}= \LRa{|\betab\cdot \n|{\u}^k_{\text{ext}}, \LRp{\ukp}^-}_{\pKin}.
\end{equation}
where ${\u}^k_{\text{ext}}$ is either the physical boundary condition or the solution of the neighboring
element that shares the same inflow boundary $\pKin$.

Consider the set $\mathcal{K}^1$ of all elements $K$ such that
$\pKin$ is a subset of the physical inflow boundary
$\pOmega^{\text{in}}$ on which
we have $\u^k_{\text{ext}}=0$ for all $k \in \mathbb{N}$.
 We obtain from \eqref{transportLocalc} that
\begin{equation}
\eqnlab{transportLocalh}
\nor{\LRp{\ukp}^-}^2_{\frac{-\Div \betab}{2}, \K}+\nor{\LRp{\ukp}^-}_{|\betab\cdot \n|/2,\pK}^2
= 0,
\end{equation}
which implies $\u^{1}=0$ on $\K \in \mc{K}^1$, i.e. our iterative solver is exact on $K\in \mathcal{K}^1$ at the first iteration.

Next, let us define $\Omega^1_h:=\Omega_h$ and  
\[
\Omega^2_h=\Omega^1_h\backslash \mc{K}^1.
\]
Consider the set $\mathcal{K}^2$ of all $K$ in $\Omega^2_h$ such that $\pKin$ is either
(possibly partially) a subset of the physical inflow boundary
$\pOmega^{\text{in}}$ or (possibly partially) a subset of the outflow
boundary of elements in $\mc{K}^1$. This implies, on $\pKin \in \mc{K}^2$, $\u^k_{\text{ext}}=0$
for all $k \in \mathbb{N}\setminus\LRc{1}$. Thus, $\forall \K \in \mc{K}^2$, we have
\begin{equation}\label{NtransportLocalk}
    \nor{\LRp{\uk}^-}_{\frac{-\Div \betab}{2},\K}^2 +
    \nor{\LRp{\uk}^-}_{|\betab\cdot \n|/2,\pK}^2 = 0, \quad \forall k \in \mathbb{N}\setminus\LRc{1},
\end{equation}
which implies $\u^{2}=0$ in $\K \in \mc{K}^2$, i.e. our iterative solver is exact on $K \in \mathcal{K}^2$ at the second iteration.

 Repeating the same argument, we can construct subsets $\mathcal{K}^j
 \subset \Omega_h$, on which the iterative solution on $K\in
 \mathcal{K}^j$ is the exact HDG solution at the $j$-th iteration. Since
 the number of elements $\Nel$ is finite, there exists $J \le \Nel$ such that
 $\Omega_h = \cup_{j=1}^J \mc{K}^j$. It follows that the iHDG-II algorithm provides exact HDG solution on $\Omega_h$
 after $J$ iterations.
\end{proof}

\begin{remark}
  \remalab{matching}
Compared to iHDG-I \cite{iHDG}, which requires an infinite number of
iterations to converge, iHDG-II needs finite number of iterations for convergence. The key to the
improvement is the stronger coupling between $\ubh$ and $\ub$ by using
$\LRp{\ubkp}^-$ in \eqnref{iHDGtrace} instead of $\LRp{\ubk}^-$. The
proof of Theorem \theoref{DDMConvergence} also shows that iHDG-II
automatically marches the flow, i.e., each iteration yields the
HDG solution exactly for a group of elements. Moreover, the marching process is
automatic (i.e. does not require an ordering of elements) and adapts
to the velocity field $\betab$ under consideration.
\end{remark}

\subsection{Time-dependent transport equation}
\seclab{iHDG-timedependent}
In this section we first comment on a space-time formulation of the iHDG methods and compare it with the parareal methods studied in \cite{gander2008analysis} for the time-dependent scalar transport equation. Then we consider the semi-discrete version of iHDG combined with traditional time integration schemes and compare it with
the fully implicit and explicit DG/HDG schemes.

\subsubsection{Comparison of space-time iHDG and parareal methods for the scalar transport equation}
\seclab{iHDG-parareal}


Space-time finite element methods have been studied extensively for the past several years both in the context of continuous and discontinuous Galerkin methods \cite{kaczkowski1975method,argyris1969finite,oden1969general,klaij2006space,ellis2016robust} and  HDG methods \cite{rhebergen2012space}. Parareal methods, on the other hand, were first introduced in \cite{lions2001resolution}  and various modifications have been  proposed and studied (see \cite{garrido2006convergent,farhat2003time,maday2002parareal,minion2011hybrid,gander2007analysis} and references therein).

In the scope of our work, we compare our methods with the parareal scheme proposed in \cite{gander2008analysis} for the scalar advection equation. Let us start with the following ordinary differential equation 
\begin{equation}\label{ODE}
\DD{u}{t} = f \quad \text{ in }
    (0,T), \ \
u(0) = \g,
\end{equation}
for some positive constant $T>0$. 
\begin{corollary}
\corolab{ODE_iHDG}
Suppose we discretize the temporal derivative in \eqref{ODE} using the
iHDG-II method with the upwind flux and the elements $\K_j$ are
ordered such that $\K_j$ is on the left of $\K_{j+1}$. At iteration
$k$, $\eval{\u^k}_{\K_j}$ converges to the HDG solution
$\eval{\u}_{\K_j}$ for $j \le k$.
\end{corollary}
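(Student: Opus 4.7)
The plan is to recognize \eqref{ODE} as a one-dimensional scalar transport equation on $(0,T)$ with constant velocity $\betab \equiv 1$ and physical inflow boundary $\pOmega^{\text{in}} = \LRc{0}$. Under the prescribed left-to-right ordering of the $\K_j$, the inflow endpoint of $\K_j$ (at time $t_{j-1}$) is either the physical inflow (when $j=1$) or the outflow endpoint of $\K_{j-1}$ (when $j \ge 2$), and no other element shares this interface. This makes the corollary a direct specialization of Theorem \theoref{DDMConvergence} to a sequential one-dimensional geometry.

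With this identification, the plan is to reuse the nested construction of the subsets $\mc{K}^j$ from the proof of Theorem \theoref{DDMConvergence}. Concretely, I would observe that $\mc{K}^1 = \LRc{\K_1}$, since only $\K_1$ has its inflow on $\pOmega^{\text{in}}$, and inductively $\mc{K}^j = \LRc{\K_j}$ for every $j \ge 2$, since the inflow of $\K_j$ is the outflow of the unique element $\K_{j-1} \in \mc{K}^{j-1}$. The induction argument leading to \eqnref{transportLocalh} and \eqref{NtransportLocalk} then shows that once the left neighbor has converged to the HDG solution at some iteration, the local solve on $\K_j$ at the following iteration reproduces the exact HDG solution, yielding $\eval{\u^k}_{\K_j} = \eval{\u}_{\K_j}$ for all $j \le k$.

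The main obstacle is that Theorem \theoref{DDMConvergence} and Lemma \lemref{wellposednesstransport} both assume $-\Div \betab \ge \alpha > 0$, whereas for \eqref{ODE} we have $\Div \betab \equiv 0$ and the volumetric term in \eqref{wellposednesstransport} degenerates. The way I would handle this is to note that the surviving boundary contribution $\nor{\LRp{\u^{k+1}}^-}^2_{|\betab\cdot\n|/2,\pK_j}$ alone forces $\u^{k+1}$ to vanish at both endpoints of $\K_j$ whenever the inflow data vanishes, and then to propagate the zero trace into the interior using the homogeneous local equation \eqnref{transportLocalk1}: with vanishing endpoint data and zero forcing, the local problem is equivalent to the initial-value problem $\u' = 0$ with zero initial value, whose only solution in $\Poly^\p(\K_j)$ is $\u \equiv 0$. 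This restores the well-posedness and the inductive step, and the corollary follows.
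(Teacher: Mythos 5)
Your proposal follows the same basic route as the paper---recast \eqref{ODE} as the 1D transport equation \eqnref{transport} with $\betab=1$ and invoke the layer-by-layer argument of Theorem \theoref{DDMConvergence}, with each layer $\mc{K}^j$ collapsing to the single element $\K_j$ under the left-to-right ordering---but you go further than the paper does. The paper's proof is a one-line citation of Theorem \theoref{DDMConvergence} plus induction, and it silently passes over the fact that the hypothesis $-\Div\betab\ge\alpha>0$ of both Lemma \lemref{wellposednesstransport} and Theorem \theoref{DDMConvergence} fails here ($\Div\betab\equiv0$), so that the volumetric term in \eqref{wellposednesstransport} degenerates and the energy identity controls only the endpoint values of $\u^{k+1}$ on each $\K_j$. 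Your repair of this is correct and is exactly what is needed: with zero inflow data the identity forces $\u^{k+1}$ to vanish at both endpoints of $\K_j$, and the homogeneous local equation, integrated by parts and using the vanishing inflow value, reduces to $\LRp{(\u^{k+1})',\v}_{\K_j}=0$ for all $\v\in\Poly^\p\LRp{\K_j}$; since $(\u^{k+1})'\in\Poly^{\p-1}\LRp{\K_j}$ this gives $(\u^{k+1})'=0$, and the vanishing endpoint values then give $\u^{k+1}\equiv0$, restoring both the well-posedness of the local solve and the inductive step. So your argument is not merely a restatement of the paper's proof: it identifies and fills a genuine (if small) gap in the direct appeal to Theorem \theoref{DDMConvergence}. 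The only point I would tighten is the phrase ``equivalent to the initial-value problem $\u'=0$'': make the orthogonality/integration-by-parts step explicit as above, since that is where the finite-dimensionality of $\Poly^\p$ is actually used to conclude $\u^{k+1}\equiv 0$ from endpoint data alone.
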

\begin{proof}
    Since \eqref{ODE} can be considered as 1D transport equation \eqnref{transport} with  velocity $\betab=1$, the proof follows directly from Theorem \theoref{DDMConvergence} and induction.  
\end{proof}

Note that the iHDG scheme can be considered as a parareal algorithm in
which the fine propagator is taken to be the local solver
\eqnref{iHDGlocal} and the coarse propagator corresponds to the
conservation condition \eqnref{iHDGtrace}. However, unlike existing
parareal algorithms, the coarse propagator of iHDG-parareal is dependent on the
fine propagator. Moreover, Corollary \cororef{ODE_iHDG}
says that after $k$ iterations the iHDG-parareal solution converges up
to element $k$, a feature common to the parareal algorithm studied in \cite{gander2008analysis}. For time dependent hyperbolic PDEs, the space-time
iHDG method again can be understood as parareal approach, and in this
case, a layer of space-time elements converges after each
iHDG-parareal iteration (see Remark \remaref{matching}). See Figure \figref{shocksoln} and Table
\ref{tab:2d_discont_3d_smooth} of section \secref{numerical} for a
demonstration in 2D where either $x$ or $y$ is considered as ``time''.
It should be pointed out that the specific parareal method in
\cite{gander2008analysis} exactly traces the characteristics, and
hence may take less iterations to converge than the iHDG-parareal
method, but this is only true if the forward Euler discretization in
time, upwind finite difference in space, and $CFL=1$ are used with
constant advection velocity.

\subsection{iHDG as a locally implicit method}

In this section we discuss the relationship between iHDG and
implicit/explicit HDG methods. For the simplicity of the exposition,
we consider time-dependent scalar transport equation given by:
\begin{equation}
\eqnlab{time_transport}
    \frac{\partial{\u^e}}{\partial{t}} + \betab \cdot \Grad\u^e = f. 
\end{equation}
We first review the implicit/explicit HDG schemes for
\eqnref{time_transport}, and then compare them with iHDG-II. The implicit Euler HDG scheme for \eqnref{time_transport} reads
\begin{multline}
\eqnlab{implicitHDG}
    \LRp{\frac{\u^{m+1}}{\Delta t}, \v}_\K-\LRp{\u^{m+1}, \Div(\betab\v)}_\K + \LRa{\betab \cdot \n \u^{m+1} + \tau(\u^{m+1} - \uh^{m+1}),\v}_\pK  \\
=\LRp{\f^{m+1}+\frac{\u^m}{\Delta t},\v}_\K,\\
\LRa{\jump{\tau\uh^{m+1}},\mub}_\pK = \LRa{\jump{\tau\u^{m+1}} + \jump{\betab \cdot \n \u^{m+1}}, \mub}_\pK.
  \end{multline}
Here, $\u^{m+1}$ and $\uh^{m+1}$ stands for 
the volume and the trace unknowns at the current 
time step, whereas $\u^{m}$ and $\uh^{m}$ are the computed solutions from the previous 
time step. Clearly,  $\u^{m+1}$ and $\uh^{m+1}$ are coupled and this can
be a challenge for large-scale problems.


Next let us consider an explicit HDG with forward Euler discretization in time for \eqnref{time_transport}:
\begin{equation*}
    \LRp{\frac{\u^{m+1}}{\Delta t}, \v}_\K = \LRp{\u^{m}, \Div(\betab\v)}_\K - \LRa{\betab \cdot \n \u^{m} + \tau(\u^{m} - \uh^{m}),\v}_\pK + \LRp{\f^{m}+\frac{\u^m}{\Delta t},\v}_\K,
\end{equation*}
\begin{equation*}
\LRa{\jump{\tau\uh^{m}},\mub}_\pK = \LRa{\jump{\tau\u^{m}} + \jump{\betab \cdot \n \u^{m}}, \mub}_\pK,
\end{equation*}
which shows that we can solve for $\u^{m+1}$ element-by-element, completely independent of each other.
However, since it is an explicit scheme, the $CFL$ restriction for stability can increase the 
computational cost for problems involving fast time scales and/or fine meshes.


Now applying {\em one iteration} of the iHDG-II scheme
for the implicit HDG formulation \eqnref{implicitHDG} 
with $\u^m$ as the initial guess yields
\begin{multline}
\nonumber
    \LRp{\frac{\u^{m+1}}{\Delta t}, \v}_\K-\LRp{\u^{m+1}, \Div(\betab\v)}_\K + \LRa{\betab \cdot \n \u^{m+1} + \tau(\u^{m+1}-\uh^{m,m+1}),\v}_\pK  = \\
    \LRp{\f^{m+1}+\frac{\u^m}{\Delta t},\v}_\K,\\
    \LRa{\jump{\tau\uh^{m,m+1}},\mub}_\pK = \LRa{\tau^-\LRp{\u^{m+1}}^-+\tau^+\LRp{\u^m}^+, \mub}_\pK \\
                                            +\LRa{\betab \cdot \n^-\LRp{\u^{m+1}}^-+\betab \cdot \n^+\LRp{\u^m}^+, \mub}_\pK.
\end{multline}
Compared to the explicit HDG scheme, iHDG-II requires local solves
since it is {\em locally implicit}. As such, its CFL restriction is
much less (see Figure \ref{CFL_vs_k}), while still having similar
parallel scalability of the explicit method.\footnote{In fact, the
  parallel efficiency could be much more than explicit methods since
  the local solves can be well overlapped with the communication.}
Indeed, Figure \ref{CFL_vs_k} shows that the CFL restriction is only
indirectly through the increase of the number of iterations; for CFL
numbers between $1$ and $5$, the number of iterations varies
mildly. Thus, as a locally implicit method, iHDG-II combines
advantages of both explicit (e.g. matrix free and parallel
scalability) and implicit (taking reasonably large time stepsize
without facing instability) methods. Clearly, on convergence iHDG
solution is, up to the stopping tolerance, the same as the
fully-implicit solution.


\subsection{iHDG-II for system of hyperbolic PDEs}
\label{iiHDG_shallow}
In this section, as an example for the system of linear hyperbolic PDEs, we
consider the following linearized oceanic  shallow water system
\cite{GiraldoWarburton08}:
\begin{equation}
    \pp{}{t}\LRp{
      \begin{array}{c}
        \phi^e \\
       \Phi \u^e \\
        \Phi \v^e
      \end{array}
    } + 
    \pp{}{x}\LRp{
      \begin{array}{c}
        \Phi  \u^e \\
        \Phi\phi^e \\
        0
      \end{array}
    } + 
    \pp{}{y}\LRp{
      \begin{array}{c}
        \Phi \v^e \\
        0 \\
        \Phi\phi^e
      \end{array}
    } = 
    \LRp{
      \begin{array}{c}
        0 \\
        f\Phi\v^e - \gamma \Phi\u^e + \frac{\tau_x}{\rho} \\
       -f\Phi\u^e - \gamma \Phi\v^e + \frac{\tau_y}{\rho}
      \end{array}
    }
\eqnlab{linearizedShallow}
\end{equation}
where $\phi = g \h$ is the geopotential height with $g$ and $\h$ being
the gravitational constant and the perturbation of the free surface
height, $\Phi > 0$ is a constant mean flow geopotential height, $\vel := \LRp{\u,\v}$ is
the perturbed velocity, $\gamma \ge 0$ is the bottom friction, 
$\bs{\tau}:=\LRp{\tau_x,\tau_y}$ is the wind stress, and $\rho$ is the density of
the water. Here, $f = f_0 + \beta \LRp{y - y_m}$ is the
Coriolis parameter,  where $f_0$, $\beta$, and $y_m$ are given constants. We study the iHDG-II methods 
for this equation and compare it with the results 
in \cite{iHDG}.

For the simplicity of the exposition and the analysis, let us employ
the backward Euler HDG discretization for
\eqnref{linearizedShallow}. Since the unknowns of interest are those
at the $(m+1)$th time step, we can suppress the time index for the
clarity of the exposition.  Furthermore, since the system is linear it is sufficient to consider
homogeneous system with zero initial condition, zero boundary condition,
and zero forcing (wind stress).
Applying the iHDG-II algorithm \ref{al:DDMhyperbolic} to the
homogeneous system gives 
\begin{subequations}
\eqnlab{localSolverDD}
\begin{align}\nonumber
&\LRp{\frac{\phikp}{\Delta t},\varphi_1}_K - \LRp{\Phi\velkp,\Grad\varphi_1}_K + \LRa{\Phi \velkp \cdot \n + \sqrt{\Phi}\LRp{\phikp -\phih^{k,k+1}},\varphi_1}_\pK \\\eqnlab{localSolverPhiDD}
&= 0,\\ \nonumber
&\LRp{\frac{{\Phi\ukp}}{\Delta t},\varphi_2}_K - \LRp{\Phi\phikp,\pp{\varphi_2}{x}}_K + \LRa{\Phi\phih^{k,k+1}\n_1,\varphi_2}_\pK  \\\eqnlab{localSolverUDD}
&= \LRp{f\Phi\vkp - \gamma \Phi\ukp, \varphi_2}_K, \\ \nonumber
&\LRp{\frac{{\Phi\vkp}}{\Delta t},\varphi_3}_K - \LRp{\Phi\phikp,\pp{\varphi_3}{y}}_K + \LRa{\Phi\phih^{k,k+1}\n_2,\varphi_3}_\pK \\\label{localSolverVDD} 
&= \LRp{-f\Phi\ukp - \gamma \Phi\vkp , \varphi_3}_K,
\end{align}
\end{subequations}
where $\varphi_1, \varphi_2$ and $\varphi_3$ are the test functions, and
\begin{equation}
  \label{phihDD}
  \phih^{k,k+1}= \frac{1}{2}\LRc{\LRp{\phikp}^-+\LRp{\phik}^+} + \frac{\sqrt{\Phi}}{2}\LRc{\LRp{\velkp}^-\cdot\n^{-}+\LRp{\velk}^+\cdot\n^+}.
\end{equation}


\begin{lemma} The local solver \eqnref{localSolverDD} of the iHDG-II algorithm for the linearized shallow water equation is well-posed.
\end{lemma}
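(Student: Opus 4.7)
The plan is to establish well-posedness by exploiting linearity and the finite-dimensionality of the local polynomial spaces: it suffices to show that the homogeneous problem (in which the ``data'' from the previous iteration, namely the neighbor traces $\LRp{\phi^{k}}^+$ and $\LRp{\vel^{k}}^+$ in \eqref{phihDD}, are set to zero) admits only the trivial solution $\phikp = \ukp = \vkp = 0$ on $\K$.

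First I would test \eqnref{localSolverPhiDD}, \eqnref{localSolverUDD}, \eqref{localSolverVDD} with $\varphi_1 = \phikp$, $\varphi_2 = \ukp$, $\varphi_3 = \vkp$, and sum them. The two volume gradient contributions $-\LRp{\Phi\velkp,\Grad\phikp}_\K$ and $-\LRp{\Phi\phikp,\Div\velkp}_\K$ can be combined via integration by parts into a single boundary term $-\LRa{\Phi\velkp\cdot\n,\phikp}_{\pK}$, since $\Phi$ is constant. The Coriolis contributions $-\LRp{f\Phi\vkp,\ukp}_\K + \LRp{f\Phi\ukp,\vkp}_\K$ cancel by antisymmetry, and the friction terms $\gamma\Phi\nor{\ukp}_\K^2 + \gamma\Phi\nor{\vkp}_\K^2$ are manifestly nonnegative, as are the mass contributions $\frac{1}{\Delta t}\LRp{\nor{\phikp}_\K^2 + \Phi\nor{\ukp}_\K^2 + \Phi\nor{\vkp}_\K^2}$.

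The main obstacle, and the step that genuinely uses the upwind structure of the trace, is showing that the remaining boundary expression is also nonnegative. Collecting what is left, one needs to handle
\[
\LRa{\Phi\velkp\cdot\n + \sqrt{\Phi}\LRp{\phikp - \phih^{k,k+1}},\phikp}_\pK + \LRa{\Phi\phih^{k,k+1},\velkp\cdot\n}_\pK - \LRa{\Phi\velkp\cdot\n,\phikp}_\pK.
\]
After substituting $\phih^{k,k+1} = \tfrac12 \phikp + \tfrac{\sqrt{\Phi}}{2}\velkp\cdot\n$ from \eqref{phihDD} with zero neighbor data, the cross-terms of the form $\LRa{\velkp\cdot\n,\phikp}_\pK$ cancel by symmetry of the scalar $L^2$ inner product, and what remains is the clean sum of squares
\[
\tfrac{\sqrt{\Phi}}{2}\nor{\phikp}_{\pK}^2 + \tfrac{\Phi\sqrt{\Phi}}{2}\nor{\velkp\cdot\n}_{\pK}^2 \ge 0.
\]

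Putting all pieces together yields an identity of the form (positive mass) + (nonnegative friction) + (nonnegative boundary squares) $= 0$. The strict positivity of the $1/\Delta t$ mass contribution on $\K$ then forces $\phikp$, $\ukp$, and $\vkp$ to vanish on $\K$, which establishes uniqueness and therefore well-posedness of the local solver. I expect the bookkeeping of the boundary-term cancellation to be the only delicate step; the key qualitative point is that the upwind weight $\sqrt{\Phi}$ in \eqnref{localSolverPhiDD} and the specific definition of $\phih^{k,k+1}$ in \eqref{phihDD} are precisely what produce the coercive boundary squares needed here, independently of the Coriolis and friction parameters.
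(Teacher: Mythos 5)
Your proposal is correct and follows essentially the same route as the paper: test with the iterate itself, integrate the geopotential volume term by parts so the volume cross terms cancel (along with the Coriolis terms by antisymmetry), substitute the definition of $\phih^{k,k+1}$ with zero neighbor data, observe that the remaining boundary cross terms cancel and leave $\tfrac{\sqrt\Phi}{2}\nor{\phikp}_\pK^2+\tfrac{\Phi\sqrt\Phi}{2}\nor{\velkp\cdot\n}_\pK^2$, and conclude from the resulting sum of nonnegative terms that the homogeneous local problem has only the trivial solution. The only cosmetic difference is that you work on a single element while the paper first sums over all elements before substituting the trace; the underlying identity is the same.
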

\begin{proof}
    Since $\LRc{\LRp{\phik}^+,\Phi\LRp{\velk}^+}$ is a ``forcing" to the local solver it is sufficient to set them to $\LRc{{0,{\bf 0}}}$ and show that the only solution possible is $\LRc{\LRp{\phik}^-,\Phi\LRp{\velk}^-}=\LRc{{0,{\bf 0}}}$.
Choosing the test functions $\varphi_1 =\phikp$, $\varphi_2 = \ukp$
and $\varphi_3 = \vkp$ in \eqnref{localSolverDD}, integrating the second term in \eqnref{localSolverPhiDD} by parts, and then summing equations in \eqnref{localSolverDD} altogether, we obtain

\begin{align}\nonumber
    &\frac{1}{\Delta t}\LRp{\phikp,\phikp}_K+\frac{\Phi}{\Delta t}\LRp{\velkp,\velkp}_K+\sqrt\Phi\LRa{\phikp,\phikp}_\pK
    +\gamma\Phi\LRp{\velkp,\velkp}_\K \\\label{localSolverTest}
    &-\sqrt\Phi\LRa{\phih^{k,k+1},\phikp}_\pK+\Phi\LRa{\phih^{k,k+1},{\bf\n}\cdot{\velkp}}_\pK=0.
\end{align}
Summing \eqref{localSolverTest} over all elements yields
\begin{align}\nonumber
    &\sum_\K\frac{1}{\Delta t}\LRp{\phikp,\phikp}_K+\frac{\Phi}{\Delta t}\LRp{\velkp,\velkp}_K+\gamma\Phi\LRp{\velkp,\velkp}_\K \\\label{SumlocalSolverTest}
    &+\sqrt\Phi\LRa{\phikp,\phikp}_\pK-\sqrt\Phi\LRa{\phih^{k,k+1},\phikp}_\pK+\Phi\LRa{\phih^{k,k+1},{\bf\n}\cdot{\velkp}}_\pK=0.
\end{align}
Substituting \eqref{phihDD} in the above equation and cancelling some terms we get,
\begin{align} \nonumber
    &\sum_\K\frac{1}{\Delta t}\nor{\LRp{\phikp}^-}_\K^2+\LRp{\gamma+\frac{1}{\Delta t}}\nor{\LRp{\velkp}^-}_{\Phi,\K}^2+\frac{\sqrt\Phi}{2}\nor{\LRp{\phikp}^-}_\pK^2 \\ \nonumber 
    &+\frac{\sqrt{\Phi}}{2}\nor{\LRp{\velkp\cdot\n}^-}_{\Phi,\pK}^2  
=\sum_\pK\frac{\sqrt\Phi}{2}\LRa{\LRc{\LRp{\phik}^++\sqrt{\Phi}\LRp{\velk\cdot\n}^+},\LRp{\phikp}^-}_\pK\\
\eqnlab{localsolver1}
&-\frac{\Phi}{2}\LRa{\LRc{\LRp{\phik}^++\sqrt{\Phi}\LRp{\velk\cdot\n}^+},\LRp{\velkp\cdot\n}^-}_\pK.
\end{align}

Since $\Phi>0$, all the terms on the left hand side are 
positive. When we set $\LRc{\LRp{\phik}^+,\Phi\LRp{\velk}^+}=\LRc{0,{\bf 0}}$, i.e. the data from  neighboring elements, the only solution possible is 
$\LRc{\LRp{\phikp}^-,\Phi\LRp{\velkp}^-}=\LRc{0,{\bf 0}}$ and hence the method is well-posed.
\end{proof}

Next, our goal is to show that $\LRp{\phikp,\Phi\velkp}$ converges to
zero. To that end, let us define
\begin{eqnarray}\eqnlab{ContractionConstant}
\mathcal{C}:=\frac{\mathcal{A}}{\mathcal{B}}, \quad \mathcal{A}
:=\frac{\max\LRc{1,\Phi}+\sqrt\Phi}{4\varepsilon}, \quad
\mathcal{G}
:=\frac{\varepsilon\LRp{\max\LRc{1,\Phi}+\sqrt\Phi}}{4}
\end{eqnarray}
and
\begin{align*}
    \mathcal{B}_1:=\left(\frac{ch}{\Delta t(\p+1)(\p+2)}+\frac{2\sqrt\Phi -(\Phi+\sqrt\Phi)\varepsilon}{4}\right)\\
    \mathcal{B}_2:=\left(\left(\gamma+\frac{1}{\Delta t}\right)\frac{ch}{(\p+1)(\p+2)}+\frac{2\sqrt\Phi-(1+\sqrt\Phi)\varepsilon}{4}\right),
\mathcal{B}
&:=\min\LRc{\mathcal{B}_1,\mathcal{B}_2}.
\end{align*}
where $0<c\le1$, $\varepsilon>0$ are constants.
We also need the following norms:
\begin{align*}
\nor{\LRp{\phik,\velk}}^2_{\Omega_h}&:=\nor{\phik}_{\Omega_h}^2 + \nor{\velk}_{\Phi, \Omega_h}^2, \\
\nor{\LRp{\phik,\velk\cdot\n}}^2_\Gh&:=\nor{\phik}_{\Gh}^2 + \nor{\velk\cdot\n}_{\Phi, \Gh}^2.\nonumber
\end{align*}

\begin{theorem}\theolab{ThmConvergenceShallowWater} Assume that the mesh size $h$, the time step $\Delta t$ and the solution order $\p$ are chosen such that $\mc{B} > 0$ and $\mathcal{C}<1$,
then the approximate solution at the $k$th iteration
$\LRp{\phik,\velk}$ converges to zero in the following sense
\begin{align}\nonumber
    \nor{\LRp{\phik,\velk\cdot\n}}^2_\Gh &\leq \mathcal{C}^k \nor{\LRp{\phi^0,\vel^0\cdot\n}}^2_\Gh,\\ \nonumber
    \nor{\LRp{\phik,\velk}}^2_{\Omega_h} &\leq \Delta t \LRp{\mc{A}+\mc{G}\mc{C}}\mc{C}^{k-1}\nor{\LRp{\phi^0,\vel^0\cdot\n}}^2_\Gh, \nonumber
\end{align}
where $\mc{C}$, $\mc{A}$ and $\mc{G}$ are defined in \eqnref{ContractionConstant}.
\end{theorem}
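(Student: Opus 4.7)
The plan is to turn the local energy identity \eqnref{localsolver1} into a contraction recurrence on the skeleton norm $\nor{(\phi,\vel\cdot\n)}_\Gh$ and then to deduce the volume estimate from a second application of the same identity. Three ingredients are needed: Young's inequality with a free parameter $\varepsilon$ to split the cross terms on the right, a discrete trace/inverse inequality of the form $\nor{v}^2_\pK \le \frac{(\p+1)(\p+2)}{ch}\nor{v}^2_\K$ to upgrade the $1/\Delta t$ volume-mass terms on the left into boundary contributions, and the observation that summing over elements swaps the ``$-$'' and ``$+$'' labels on each interior face so that the element-local ``$-$'' norms reassemble into the full skeleton norm.

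First I would start from \eqnref{localsolver1} and bound each inner product on the right by Young's inequality, writing for example $|\LRa{a^+,b^-}_\pK| \le \frac{1}{2\varepsilon}\nor{a^+}^2_\pK + \frac{\varepsilon}{2}\nor{b^-}^2_\pK$ with $a^+ := \LRp{\phi^k}^+ + \sqrt{\Phi}\LRp{\vel^k\cdot\n}^+$ and $b^-$ equal to $\LRp{\phi^{k+1}}^-$ or $\sqrt{\Phi}\LRp{\vel^{k+1}\cdot\n}^-$. Expanding $\nor{a^+}^2_\pK$ through the identity $(x+y)^2 = x^2 + 2xy + y^2$ and splitting the cross-product by $2xy \le \sqrt{\Phi}(x^2 + y^2/\Phi)$ produces an upper bound of the form $(\max\{1,\Phi\}+\sqrt{\Phi})(\nor{(\phi^k)^+}^2 + \nor{(\vel^k\cdot\n)^+}^2_\Phi)$, which is exactly the combination entering $\mc{A}$ and $\mc{G}$. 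The $\varepsilon$-weighted ``minus'' terms are then absorbed into the boundary mass $\frac{\sqrt\Phi}{2}\nor{(\phi^{k+1})^-}^2_\pK + \frac{\sqrt\Phi}{2}\nor{(\vel^{k+1}\cdot\n)^-}^2_{\Phi,\pK}$ already on the left, producing the factors $\frac{2\sqrt\Phi-(\Phi+\sqrt\Phi)\varepsilon}{4}$ and $\frac{2\sqrt\Phi-(1+\sqrt\Phi)\varepsilon}{4}$ appearing in $\mc{B}_1$ and $\mc{B}_2$.

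Next, I would invoke the discrete trace inequality to lower-bound the remaining volume-mass terms $\frac{1}{\Delta t}\nor{(\phi^{k+1})^-}^2_\K$ and $(\gamma+1/\Delta t)\nor{(\vel^{k+1})^-}^2_{\Phi,\K}$ by $\frac{ch}{\Delta t(\p+1)(\p+2)}\nor{(\phi^{k+1})^-}^2_\pK$ and $(\gamma+1/\Delta t)\frac{ch}{(\p+1)(\p+2)}\nor{(\vel^{k+1}\cdot\n)^-}^2_{\Phi,\pK}$ respectively. Combining with the previous step shows that the left-hand side dominates $\mc{B}\LRp{\nor{(\phi^{k+1})^-}^2_\pK + \nor{(\vel^{k+1}\cdot\n)^-}^2_{\Phi,\pK}}$. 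Summing over $\K\in\Omega_h$ and using that every interior face contributes a ``$-$'' value from one side and a ``$+$'' value from the other converts both the $\mc{B}$-sum and the $\mc{A}$-sum into full skeleton norms, giving $\mc{B}\nor{(\phi^{k+1},\vel^{k+1}\cdot\n)}^2_\Gh \le \mc{A}\nor{(\phi^k,\vel^k\cdot\n)}^2_\Gh$. The first claim then follows by dividing by $\mc{B}>0$ and iterating.

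For the volume estimate I would return to the elementwise identity \eqnref{localsolver1}, drop the non-negative boundary-mass terms, multiply through by $\Delta t$, and apply Young's inequality a second time to the right-hand side; this time the $\varepsilon$-weighted half is placed on $\nor{(\phi^{k+1},\vel^{k+1}\cdot\n)}^2_\Gh$ (already controlled by the first estimate) and the $1/\varepsilon$-weighted half on $\nor{(\phi^k,\vel^k\cdot\n)}^2_\Gh$. After summing over elements this yields $\nor{(\phi^{k+1},\vel^{k+1})}^2_{\Omega_h} \le \Delta t\,\mc{A}\nor{(\phi^k,\vel^k\cdot\n)}^2_\Gh + \Delta t\,\mc{G}\nor{(\phi^{k+1},\vel^{k+1}\cdot\n)}^2_\Gh$, and inserting the first estimate produces the advertised factor $\Delta t(\mc{A}+\mc{G}\mc{C})\mc{C}^{k-1}$. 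The main obstacle is twofold: keeping the ``$+/-$'' bookkeeping consistent when summing over elements so that the skeleton norm on the right is assembled from neighbour (``$+$'') values, and tuning $\varepsilon$ so that \emph{both} $\mc{B}>0$ and $\mc{C}=\mc{A}/\mc{B}<1$ hold simultaneously. These two conditions together couple $h$, $\Delta t$, $\p$, $\Phi$ and $\gamma$, which is precisely what the hypothesis of the theorem records.
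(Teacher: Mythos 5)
Your proposal follows essentially the same route as the paper's proof: Young's inequality with parameter $\varepsilon$ applied to the right-hand side of \eqnref{localsolver1}, the inverse trace inequality \eqref{FEMTraceIn} to convert the $1/\Delta t$ volume-mass terms into boundary terms, summation over elements to assemble the skeleton contraction with factor $\mc{C}=\mc{A}/\mc{B}$, and then re-use of the same intermediate inequality (dropping the boundary terms on the left and multiplying by $\Delta t$) together with the trace decay to obtain the volume estimate with factor $\Delta t(\mc{A}+\mc{G}\mc{C})\mc{C}^{k-1}$. The only cosmetic difference is how you group the cross terms before applying Young's inequality, which yields the same constants.
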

\begin{proof}
Using Cauchy-Schwarz and Young's inequalities for the terms on the right hand side of \eqnref{localsolver1} and simplifying

\begin{align} \nonumber
    &\sum_\K\frac{1}{\Delta t}\nor{\LRp{\phikp}^-}_\K^2+\LRp{\gamma+\frac{1}{\Delta t}}\nor{\LRp{\velkp}^-}_{\Phi,\K}^2+\frac{\sqrt\Phi}{2}\nor{\LRp{\phikp}^-}_\pK^2\\\nonumber 
    &+\frac{\sqrt{\Phi}}{2}\nor{\LRp{\velkp\cdot\n}^-}_{\Phi,\pK}^2\leq\sum_\pK\frac{\Phi+\sqrt\Phi}{4\varepsilon}\nor{\LRp{\phik}^+}_\pK^2\\\nonumber
    &+\frac{1+\sqrt\Phi}{4\varepsilon}\nor{\LRp{\velk\cdot\n}^+}_{\Phi,\pK}^2+\frac{\varepsilon(\Phi+\sqrt\Phi)}{4}\nor{\LRp{\phikp}^-}_\pK^2\\
    \eqnlab{localsolver2}
    &+\frac{\varepsilon(1+\sqrt\Phi)}{4}\nor{\LRp{\velkp\cdot\n}^-}_{\Phi,\pK}^2.
\end{align}

An application of inverse trace inequality \cite{chan2016gpu} for tensor product elements gives
\begin{subequations}\label{FEMTraceIn}
\begin{align}
    \LRp{\phikp,\phikp}_K&\geq\frac{2ch}{d(\p+1)(\p+2)}\LRa{\phikp,\phikp}_\pK,\\
    \LRp{\velkp,\velkp}_K&\geq\frac{2ch}{d(\p+1)(\p+2)}\LRa{\velkp,\velkp}_\pK,
\end{align}
\end{subequations}
where $d$ is the spatial dimension which in this case is $2$ and $0<c\le1$ is a constant.\footnote{Note that for simplices we can use the trace inequalities in \cite{MR1986022} and it will change only the constants in the proof.} Inequality \eqref{FEMTraceIn}, together with \eqnref{localsolver2}, implies
 \begin{eqnarray}\label{CompareTraces}\nonumber
     &&\sum_\pK\left[\left(\frac{ch}{\Delta t(\p+1)(\p+2)}+\frac{2\sqrt\Phi -(\Phi+\sqrt\Phi)\varepsilon}{4}\right)\nor{\LRp{\phikp}^-}_\pK^2\right.\\\nonumber
     &&\left.+\left(\left(\gamma+\frac{1}{\Delta t}\right)\frac{ch}{(\p+1)(\p+2)}+\frac{2\sqrt\Phi-(1+\sqrt\Phi)\varepsilon}{4}\right)\nor{\LRp{\velkp\cdot\n}^-}_{\Phi,\pK}^2\right]\\
     &\leq&\sum_\pK\left[\frac{\Phi+\sqrt\Phi}{4\varepsilon}\nor{\LRp{\phik}^+}_\pK^2+\frac{1+\sqrt\Phi}{4\varepsilon}\nor{\LRp{\velk\cdot\n}^+}_{\Phi,\pK}^2\right],
\end{eqnarray}
which then implies
 \begin{eqnarray}\label{Contraction}\nonumber
     \nor{\LRp{\phi^{k+1},\vel^{k+1}\cdot\n}}^2_\Gh \le \mc{C}\nor{\LRp{\phi^{k},\vel^{k}\cdot\n}}^2_\Gh,
\end{eqnarray}
where the constant $\mathcal{C}$ is computed as in \eqnref{ContractionConstant}. 
Therefore 
\begin{eqnarray}\label{TraceExponentialDecay}
\nor{\LRp{\phikp,\velkp\cdot\n}}^2_\Gh \le \mc{C}^{k+1}\nor{\LRp{\phi^0,\vel^0\cdot\n}}^2_\Gh.
\end{eqnarray}
On the other hand, inequalities \eqnref{localsolver2} and \eqref{TraceExponentialDecay} imply
\begin{align}\nonumber
    \nor{\LRp{\phikp,\velkp}}^2_{\Omega_h} &\le \Delta t \LRp{\mc{A}+\mc{G}\mc{C}}\mc{C}^k\nor{\LRp{\phi^0,\vel^0\cdot\n}}^2_\Gh
\end{align}
    and this ends the proof.
\end{proof}

We now derive an explicit relation between the number of iterations
$k$, the meshsize $h$, the solution order $p$, the time step $\Delta t$ and the mean flow geopotential height $\Phi$. First, we need to find an $\varepsilon$ which makes $\mc{C}<1$. From \eqnref{ContractionConstant} we obtain the following inequality for $\varepsilon$  
\begin{equation}
\label{contraction_equation}
    \frac{\frac{\max\LRc{1,\Phi}+\sqrt\Phi}{4\varepsilon}}{\left(\frac{ch}{\Delta t(\p+1)(\p+2)}+\frac{2\sqrt\Phi -(\max\LRc{1,\Phi}+\sqrt\Phi)\varepsilon}{4}\right)}<1.
\end{equation}
A sufficient condition for the denominator to be positive and existence of a real $\varepsilon>0$ to the above inequality \eqref{contraction_equation} is
\begin{equation}
\label{coarse_constraint}
    \frac{\frac{ch}{\Delta t (\p+1)(\p+2)}}{\max\LRc{1,\Phi}+\sqrt{\Phi}}>\frac{1}{2}.
\end{equation}
This allows us to find an $\varepsilon>0$ that satisfies the
inequality \eqref{contraction_equation} for all $\Phi$. In particular, we can pick
\begin{equation}
  \eqnlab{epsilon}
    \varepsilon=\frac{\frac{2ch}{\Delta t (\p+1)(\p+2)}+\sqrt{\Phi}}{\max\LRc{1,\Phi}+\sqrt{\Phi}}.
\end{equation}
Using this value of $\varepsilon$ in equation
\eqnref{ContractionConstant} we get
\[
    \mc{C}=\LRp{\frac{\frac{\max\LRc{1,\Phi}+\sqrt{\Phi}}{\sqrt{\Phi}}}{1+\frac{2ch}{\sqrt{\Phi}\Delta t (\p+1)(\p+2)}}}^2
\]
and since the numerator is always greater than $1$, the necessary and sufficient condition for the convergence of the algorithm is given by
\[
    \frac{1}{\LRp{1+\frac{2ch}{\sqrt{\Phi}\Delta t (\p+1)(\p+2)}}^{2k}}\stackrel{k\to \infty}{\longrightarrow} 0.
\]
Using binomial theorem and neglecting higher order terms we get
\begin{equation}
\label{khp_estimate}
k=\mc{O}\LRp{\frac{\Delta t (p+1)(p+2)\sqrt{\Phi}}{4ch}}.
\end{equation}
Note that if we choose $\Delta t$ similar to explicit method, i.e.  $\Delta t =  \mc{O}\LRp{ \frac{h}{\p^2\sqrt{\Phi}}}$ \cite{TAYLOR199792}, $k=\mc{O}(1)$ independent of $h$ and $p$.
With this result in hand we are now in a better position to 
understand the stability of iHDG-I and iHDG-II algorithms for the linearized shallow water system.
For unconditional stability of the iterative algorithms
under consideration, we need $\mc{B}>0$ in \eqnref{ContractionConstant} independent of
$h,p$ and $\Delta t$. There are two terms in $\mc{B}$:
$\mc{B}_1$ coming from $\phi$ and $\mc{B}_2$ from $\vel$ or 
$\vel \cdot \n$.
We can write $\mc{B}$ in Theorem 3.6
of \cite{iHDG} for iHDG-I also as\footnote{This can be obtained
    by using Young's inequality with $\varepsilon$ in 
the proof of Theorem 3.6 in \cite{iHDG}.}
\begin{align}\label{B1_iHDG1}
    \mathcal{B}_1:=\left(\frac{ch}{\Delta t(\p+1)(\p+2)}+\frac{2\sqrt\Phi -(\Phi+\sqrt{\Phi})\varepsilon}{2}\right)\\\label{B2_iHDG1}
    \mathcal{B}_2:=\left(\left(\gamma+\frac{1}{\Delta t}\right)\frac{ch}{(\p+1)(\p+2)}-\frac{(1+\sqrt\Phi)\varepsilon}{2}\right),
\mathcal{B}
&:=\min\LRc{\mathcal{B}_1,\mathcal{B}_2}.
\end{align}
Note that for both iHDG-I and iHDG-II algorithms we have the stability
in $\phi$ independent of $h,p$ and $\Delta t$, since we can choose
$\varepsilon$ sufficiently small independent of $h,p$ and $\Delta t$
and make $\mc{B}_1>0$ in \eqref{B1_iHDG1} and
\eqnref{ContractionConstant}. However, from \eqref{B2_iHDG1} we have
to choose $\varepsilon$ as a function of $(h,p,\Delta t)$ in order to
have $\mc{B}_2>0$, and hence iHDG-I lacks the mesh independent
stability in the term associated with $\vel$. This explains the
instability observed in \cite{iHDG} for fine meshes and/or large time steps. Since $\mc{B}_2$ in \eqnref{ContractionConstant} can be made positive with a
sufficiently small $\varepsilon$, independent of $h,p$ and $\Delta t$,
iHDG-II is always stable: a significant advantage over  iHDG-I.

\section{iHDG-II for convection-diffusion PDEs}
\seclab{iHDG-convection-diffusion}
\subsection{First order form}
In this section we apply the iHDG-II algorithm \ref{al:DDMhyperbolic} to the following first order form 
of the convection-diffusion equation:
\begin{subequations}
\eqnlab{Convection-diffusion-eqn}
\begin{align}
\kappa^{-1}\sigb^e + \Grad \u^e &= 0 \quad \text{ in } \Omega, \\ 
\Div\sigb^e +\betab \cdot \Grad \u^e +\nu \u^e &= f \quad \text{ in }
\Omega.
\end{align}
\end{subequations}
We assume that \eqnref{Convection-diffusion-eqn} is well-posed, i.e.,
\begin{equation}
	\label{CondCoercivity}
	\nu-\frac{\Div \betab}{2}\geq \lambda > 0.
\end{equation}
Though this is not a restriction, we take constant diffusion coefficient for the simplicity of the exposition. 
An upwind HDG numerical flux \cite{Bui-Thanh15} is given by
\[
\Fh\cdot \n = 
\LRs{
\begin{array}{c}
\uh \n_1
\\ \uh \n_2
\\ \uh \n_3
\\ \sigb \cdot \n + \betab \cdot \n \u + \tau\LRp{\u -\uh}
\end{array}
},
\]
where $\tau=\frac{1}{2}\LRp{\alpha-\betab\cdot\n}$ and $\alpha=\sqrt{|\betab\cdot\n|^2+4}$. 
Similar to the previous sections, it is sufficient to
consider the homogeneous problem. Applying the iHDG-II algorithm \ref{al:DDMhyperbolic} we have the following iterative scheme

\begin{subequations}\label{ErrorDDMCD}
\begin{align}
\label{ErrorDDMCD1}
\kappa^{-1}\LRp{\sigbkp,\taub}_K-\LRp{\ukp,\nabla\cdot\taub}_K+\LRa{\uh^{k,k+1},\taub\cdot\n}_\pK&=0,\\
\label{ErrorDDMCD2}
-\LRp{\sigbkp,\nabla\v}_K-\LRp{\ukp,\nabla\cdot\LRp{\betab\v}-\nu{\v}}_K&+\nonumber\\
\LRa{\betab\cdot\n\ukp+\sigbkp\cdot\n+\tau(\ukp-\uh^{k,k+1}),\v}_\pK&=0,
\end{align}
\end{subequations}
where
\begin{multline}
    \label{uhatCDR}
    \uh^{k,k+1}=\frac{\LRc{\LRp{\sigbkp\cdot\n}^{-}+\LRp{\sigbk\cdot\n}^{+}}+\LRc{\betab\cdot\n^{-}\LRp{\ukp}^-+\betab\cdot\n^{+}\LRp{\uk}^+}}{\alpha}\\
    +\frac{\LRc{\tau^{-}\LRp{\ukp}^-+\tau^{+}\LRp{\uk}^+}}{\alpha}.
\end{multline}

\begin{lemma} The local solver \eqref{ErrorDDMCD} of the iHDG-II algorithm for the convection-diffusion equation is well-posed.
\end{lemma}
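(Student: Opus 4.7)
The proof plan is to mimic the energy argument used for the transport and shallow-water local solvers: exploit linearity and finite dimensionality to reduce well-posedness to uniqueness, then use carefully chosen test functions plus the algebraic identities satisfied by the upwind stabilization $\tau$ and $\alpha$ to produce a non-negative energy whose vanishing forces the local solution to be zero.

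First I would observe that \eqref{ErrorDDMCD} is a finite-dimensional linear system on a single element $K$, so it is enough to show that with zero data coming from the neighbors, i.e. with $\LRp{\sigbk}^+=\mb{0}$ and $\LRp{\uk}^+=0$ on $\pK$, the only solution is $\LRp{\sigbkp}^-=\mb{0}$ and $\LRp{\ukp}^-=0$. In that case \eqref{uhatCDR} collapses to the key identity $\alpha\uh^{k,k+1}=\sigbkp\cdot\n+(\betab\cdot\n+\tau)\ukp$ on $\pK$, which expresses $\sigbkp\cdot\n$ in terms of $\uh^{k,k+1}$ and $\ukp$ and is the algebraic engine of the argument.

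Next I would select the test functions $\taub=\sigbkp$ in \eqref{ErrorDDMCD1} and $\v=\ukp$ in \eqref{ErrorDDMCD2}, integrate the $\LRp{\ukp,\Div\sigbkp}_K$ term by parts in the first equation, and use the same integration-by-parts identity \eqnref{Identity} that was employed for the transport equation on the convection term $\LRp{\ukp,\Div(\betab\ukp)}_K$ in the second equation. Adding the two resulting expressions makes the gradient cross-terms $\pm(\sigbkp,\Grad\ukp)_K$ cancel, leaving the volume terms $\kappa^{-1}\nor{\sigbkp}_K^2$ and $((\nu-\Div\betab/2)\ukp,\ukp)_K$, both non-negative thanks to \eqref{CondCoercivity} and $\kappa>0$, together with a collection of boundary terms on $\pK$.

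The main obstacle, and the computational heart of the proof, will be handling those boundary terms. Writing them out, eliminating $\sigbkp\cdot\n$ via the identity $\sigbkp\cdot\n=\alpha\uh^{k,k+1}-(\betab\cdot\n+\tau)\ukp$, and using the defining relations $\betab\cdot\n+2\tau=\alpha$ and $\frac{1}{2}\betab\cdot\n+\tau=\frac{\alpha}{2}$, the boundary contribution should simplify, after completing the square, to
\begin{equation*}
\LRa{\alpha\LRp{\uh^{k,k+1}-\tfrac{1}{2}\ukp},\uh^{k,k+1}-\tfrac{1}{2}\ukp}_{\pK}+\tfrac{1}{4}\LRa{\alpha\ukp,\ukp}_{\pK},
\end{equation*}
which is manifestly non-negative since $\alpha=\sqrt{|\betab\cdot\n|^2+4}>0$. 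Combining with the non-negative volume terms and setting the total to zero then forces $\sigbkp=\mb{0}$ and $\ukp=0$ on $K$, establishing well-posedness. The only delicate point beyond bookkeeping is verifying that the completion of the square produces exactly the coefficient $\alpha$ throughout, which follows precisely from the two algebraic identities for $\tau$ above.
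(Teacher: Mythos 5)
Your proposal is correct and follows essentially the same route as the paper's Appendix~A: reduce to uniqueness with zero neighbor data, test with $\sigbkp$ and $\ukp$, integrate by parts and invoke \eqnref{Identity} so the gradient cross-terms cancel, and then show the remaining boundary quadratic form is non-negative using $\tau=\frac{1}{2}(\alpha-\betab\cdot\n)$. The only (cosmetic) difference is where the square is completed: you set the neighbor data to zero first and complete the square in $\bigl(\uh^{k,k+1},\ukp\bigr)$, obtaining $\LRa{\alpha(\uh^{k,k+1}-\tfrac{1}{2}\ukp),\uh^{k,k+1}-\tfrac{1}{2}\ukp}_{\pK}+\tfrac{1}{4}\LRa{\alpha\ukp,\ukp}_{\pK}$ (which I have checked is the correct reduction of the boundary terms in \eqref{ErrorDDMCD4}), whereas the paper first eliminates $\uh^{k,k+1}$ via \eqref{uhatCDR} and completes the square in $\bigl(\betab\cdot\n\,\ukp+\sigbkp\cdot\n\bigr)$ as in \eqref{completesquare}; both yield the same conclusion.
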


\begin{proof}
    The proof is similar to the one for the shallow water equation and hence is given in the Appendix A \secref{appendixa}. 
\end{proof}
       
Now, we are in a position to prove the convergence of the algorithm.
For $\varepsilon$, $h>0$ and $0<c\le1$ given, define
\begin{align}\label{DefineC1}
\mathcal{C}_1&:=\frac{(\|\betab\cdot{\bf n}\|_{L^\infty(\pK)}+\bar\tau)(\bar\tau+1)}{2\varepsilon\alpha_*}, \, \mathcal{C}_2:=\frac{(\bar\tau+1)}{2\varepsilon\alpha_*},\\\label{DefineC2}
    \mathcal{C}_3&:=\frac{\varepsilon\bar\tau(1+\bar\tau+\|\betab\cdot{\bf n}\|_{L^\infty(\pK)})}{2\alpha_*}, \, \mathcal{C}_4:=\frac{\varepsilon(1+\bar\tau+\|\betab\cdot{\bf n}\|_{L^\infty(\pK)})}{2\alpha_*},\\\label{DefineD}
    \mc{D}&:= \frac{\mc{A}}{\mc{B}}, \, \mc{A}={\max\{\mathcal{C}_1,\mathcal{C}_2\}}, \, \mc{E}:=\frac{\max\{\mathcal{C}_3,\mathcal{C}_4\}}{\min\{\kappa^{-1},\lambda\}}, \, \mc{F}:=\frac{\mc{A}}{\min\{\kappa^{-1},\lambda\}},\\\label{DefineB_12}
    \mc{B}_1&:=\frac{2ch\kappa^{-1}}{d(p+1)(p+2)}+\frac{1}{2\bar\alpha}-\mathcal{C}_4,
    \mc{B}_2:=\frac{2ch\lambda}{d(p+1)(p+2)}+\frac{1}{\bar\alpha}-\mathcal{C}_3,\\\label{DefineB}
    \mc{B}&:={\min\{\mc{B}_1,\mc{B}_2\}}.
\end{align}
As in the previous section we need the following norms 
\begin{align}
\nor{\LRp{\sigb^k,\uk}}^2_{\Omega_h}&:=\nor{\sigb^k}_{\Omega_h}^2 + \nor{\uk}_{\Omega_h}^2, \quad \nor{\LRp{\sigb^k\cdot\n,\uk}}^2_\Gh&:=\nor{\sigb^k\cdot\n}_{\Gh}^2 + \nor{\uk}_{\Gh}^2.\nonumber
\end{align}

\begin{theorem} 
\theolab{ThmUpwindDDM}
Suppose that the mesh size $h$ and the solution order $p$ are chosen such that  $\mc{B}>0$ and $\mc{D}<1$, 
the algorithm  \eqref{ErrorDDMCD1}-\eqref{uhatCDR} converges in the following sense
\begin{align}\nonumber
    \nor{\LRp{\sigb^{k}\cdot\n,\uk}}^2_\Gh &\le \mc{D}^{k}\nor{\LRp{\sigb^{0}\cdot\n,\u^{0}}}^2_\Gh,\\\nonumber
    \nor{\LRp{\sigb^{k},\uk}}^2_{\Omega_h} &\le (\mc{E}\mc{D}+\mc{F})\mc{D}^{k-1}\nor{\LRp{\sigb^{0}\cdot\n,\u^{0}}}^2_\Gh, 
\end{align}
where $\mc{D}, \mc{E}$ and $\mc{F}$ are defined in \eqref{DefineD}. 
\end{theorem}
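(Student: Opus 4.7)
The strategy parallels the proof of Theorem~\theoref{ThmConvergenceShallowWater}: derive an elementwise energy identity, substitute the formula for $\uh^{k,k+1}$ to separate $(k+1)$-iteration quantities from $k$-iteration ones, apply Young's inequality with free parameter $\varepsilon$, and finally invoke an inverse trace inequality to convert volume norms into boundary norms so as to obtain a contraction estimate on the skeleton.

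First I would take the test functions $\taub = \sigbkp$ in \eqref{ErrorDDMCD1} and $\v = \ukp$ in \eqref{ErrorDDMCD2}, and add the two equations. Integrating by parts the $(\ukp,\nabla\cdot\taub)_K$ term and using the coercivity assumption \eqref{CondCoercivity} (together with the same computation as in \eqnref{Identity}) to control the $\betab\cdot\nabla\ukp$ contribution, I would obtain on each element an identity of the form
\begin{align*}
\kappa^{-1}\nor{\sigbkp}_\K^2 + \lambda\nor{\ukp}_\K^2 + \tfrac{1}{2}\LRa{\alpha\LRp{\ukp}^-,\LRp{\ukp}^-}_\pK &\\
\le\ \LRa{(\text{trace data involving }\uh^{k,k+1}),\sigbkp\cdot\n,\ukp}_\pK.&
\end{align*}
Substituting \eqref{uhatCDR} into the boundary term splits the right-hand side into a part depending only on the current iterate $(\sigbkp,\ukp)^-$ (which will be absorbed into the left) and a part depending on the neighbor data $(\sigbk,\uk)^+$ from the previous iteration.

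Next, I would apply Cauchy--Schwarz followed by Young's inequality with the free parameter $\varepsilon$ to every mixed term, bounding $\snor{\betab\cdot\n}$ by $\nor{\betab\cdot\n}_{L^\infty(\pK)}$, $\tau$ by $\bar\tau$, and $\alpha$ from below by $\alpha_*$ (and above by $\bar\alpha$) wherever needed; this is precisely how the constants $\mathcal{C}_1,\dots,\mathcal{C}_4$ in \eqref{DefineC1}--\eqref{DefineC2} arise. After absorption, the $(k+1)$-iteration volume contributions have coefficients $\kappa^{-1}$ and $\lambda$ and the $(k+1)$-iteration boundary contributions have coefficients involving $1/\bar\alpha$ minus $\mathcal{C}_3,\mathcal{C}_4$. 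At this point I would invoke the inverse trace inequality \eqref{FEMTraceIn} applied to $\sigbkp$ and $\ukp$ on each element to convert the volume coefficients $\kappa^{-1}\nor{\sigbkp}_\K^2$ and $\lambda\nor{\ukp}_\K^2$ into boundary contributions scaled by $2ch/(d(p+1)(p+2))$. Summing over all elements then yields
\begin{equation*}
\mc{B}\,\nor{\LRp{\sigbkp\cdot\n,\ukp}}_\Gh^2 \;\le\; \mc{A}\,\nor{\LRp{\sigbk\cdot\n,\uk}}_\Gh^2,
\end{equation*}
with $\mc{A},\mc{B}$ exactly as defined in \eqref{DefineB_12}--\eqref{DefineB}. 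Under the assumption $\mc{B}>0$ and $\mc{D}=\mc{A}/\mc{B}<1$, a straightforward induction over $k$ delivers the first inequality of the theorem.

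Finally, for the volume estimate I would return to the pre-trace-inequality form of the energy bound, which reads schematically
\begin{equation*}
\min\{\kappa^{-1},\lambda\}\nor{\LRp{\sigbkp,\ukp}}_{\Omega_h}^2 \;\le\; \max\{\mathcal{C}_3,\mathcal{C}_4\}\nor{\LRp{\sigbkp\cdot\n,\ukp}}_\Gh^2 + \mc{A}\,\nor{\LRp{\sigbk\cdot\n,\uk}}_\Gh^2,
\end{equation*}
and substitute the already-proven geometric decay on the skeleton to recover the second inequality, with the constants aggregating to $\mc{E}\mc{D}+\mc{F}$ as in \eqref{DefineD}.

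I expect the main obstacle to be bookkeeping: carefully splitting each boundary term after substituting \eqref{uhatCDR} so that the $(k+1)$-side and $k$-side contributions separate cleanly, and tuning the single Young parameter $\varepsilon$ consistently across all four mixed products (the $\sigb\text{--}\sigb$, $\sigb\text{--}u$, $u\text{--}\sigb$, and $u\text{--}u$ couplings) so that the absorbed coefficients line up exactly with $\mathcal{C}_3,\mathcal{C}_4$ while the remaining ones produce $\mathcal{C}_1,\mathcal{C}_2$. Apart from this, each step is a routine combination of integration by parts, the inverse trace inequality, and induction.
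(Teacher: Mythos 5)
Your proposal follows essentially the same route as the paper's proof (given in Appendix B, building on the energy identity of Appendix A): test with $\sigbkp$ and $\ukp$, substitute \eqref{uhatCDR}, absorb the current-iterate boundary terms via Young's inequality with parameter $\varepsilon$ (the paper additionally handles the $\betab\cdot\n$ cross term by completing the square as in \eqref{completesquare}), apply the inverse trace inequality \eqref{FEMTraceIn} to obtain the skeleton contraction with constant $\mc{D}=\mc{A}/\mc{B}$, and recover the volume estimate from the pre-trace-inequality bound. The argument and the constants you identify match the paper's.
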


\begin{proof}
The proof is similar to the one for the shallow water equation and hence is given in the Appendix B \secref{appendixb}. 
\end{proof}

Similar to the discussion in section \ref{iiHDG_shallow}, one can show that
\begin{equation}
\eqnlab{khp_estimate_CDR}
k=\mc{O}\LRp{\frac{d(p+1)(p+2)}{8\bar\alpha ch \min\LRc{\kappa^{-1},\lambda}}}.
\end{equation}

For time-dependent convection-diffusion equation, we discretize the spatial differential operators using HDG. For the temporal
derivative, we use implicit time stepping methods, again with either backward Euler or Crank-Nicolson method
for simplicity. The analysis in this case is almost identical to
the one for steady state equation except that we now have an
additional $L^2$-term $\LRp{\ukp,\v}_{\K}/\Delta t$ in the local
equation \eqref{ErrorDDMCD2}. This improves the convergence of the iHDG-II method. Indeed, the convergence analysis is the same except we
now have $\lambda + 1/\Delta t$ in place of
$\lambda$. In particular we have the following estimation
\[
    k=\mc{O}\LRp{\frac{d(p+1)(p+2)}{8\bar\alpha ch \min\LRc{\kappa^{-1},\LRp{\lambda+1/\Delta t}}}}.
\]

\begin{remark}
  Similar to the shallow water system if we choose $\Delta t = \mc{O}\LRp{\frac{h}{\p^2}}$ then the number of 
iterations is independent of $h$ and $p$. 
This is more efficient than the iterative 
hybridizable IPDG method for the  
parabolic equation
in \cite{gander2016analysis}, which requires
$\Delta t = \mc{O}(\frac{h^2}{\p^4})$ in order to achieve constant iterations. 
The reason is perhaps due the fact that hybridizable IPDG is posed directly on the
second order form whereas HDG uses the first order form.
While iHDG-I has mesh independent stability for only $\u$ (see \cite[Theorem 4.1]{iHDG}), iHDG-II does for both $u$ and $\sigb$; an important improvement.
\end{remark}

\section{Numerical results}
\seclab{numerical}
In this section various numerical results verifying
the theoretical results are provided for the transport
equation, the linearized shallow water equation, and the
convection-diffusion equation in both two- and three-dimensions.

\subsection{Transport equation}
We consider the same 2D and 3D test 
cases in \cite[sections 5.1.1 and 5.1.2]{iHDG}. The domain  is an unit square/cube
with structured quadrilateral/hexahedral elements.
Throughout the numerical section, we use the following
stopping criteria 
\begin{equation}
\eqnlab{tolerancecriteria}
|\norm{u^k-u^e}_{\Ltwo}-\norm{u^{k-1}-u^e}_{\Ltwo}|<10^{-10},
\end{equation}
if the exact solution is available, and
\begin{equation}
\eqnlab{tolerancecriteriarelerror}
\norm{u^k-u^{k-1}}_{\Ltwo}<10^{-10},
\end{equation}
if the exact solution is not available.

From Theorem \theoref{DDMConvergence}, the theoretical number of
iterations is approximately $d \times (N_{el})^{1/d}$ (where $d$ is the 
dimension). It can be seen from the fourth and fifth columns of Table
\ref{tab:2d_discont_3d_smooth} that the numerical results agree well
with the theoretical prediction. We can also see that the number of
iterations is independent of solution order, which is consistent
with the theoretical result Theorem \theoref{DDMConvergence}. Figure \figref{shocksoln} shows the
solution converging from the inflow boundary to the outflow boundary
in a layer-by-layer manner. Again, the process is automatic, i.e., no
prior element ordering or information about the advection velocity is required.

Now, we study the parallel performance of the iHDG algorithm. For this
purpose we have implemented iHDG algorithm on top of {\it mangll}
\cite{WilcoxStadlerBursteddeEtAl10,BursteddeGhattasGurnisEtAl10,BursteddeGhattasGurnisEtAl08}
which is a higher order continuous/discontinuous finite element
library that supports large scale parallel simulations using MPI. The
simulations are conducted on {\bf Stampede} at the Texas Advanced
Computing Center (TACC).

\begin{figure}[h!t!b!]
  \subfigure[$\uk$ at $k = 16$]{
    \includegraphics[trim=1.0cm 0.5cm 1.5cm 1cm,clip=true,width=0.3\columnwidth]{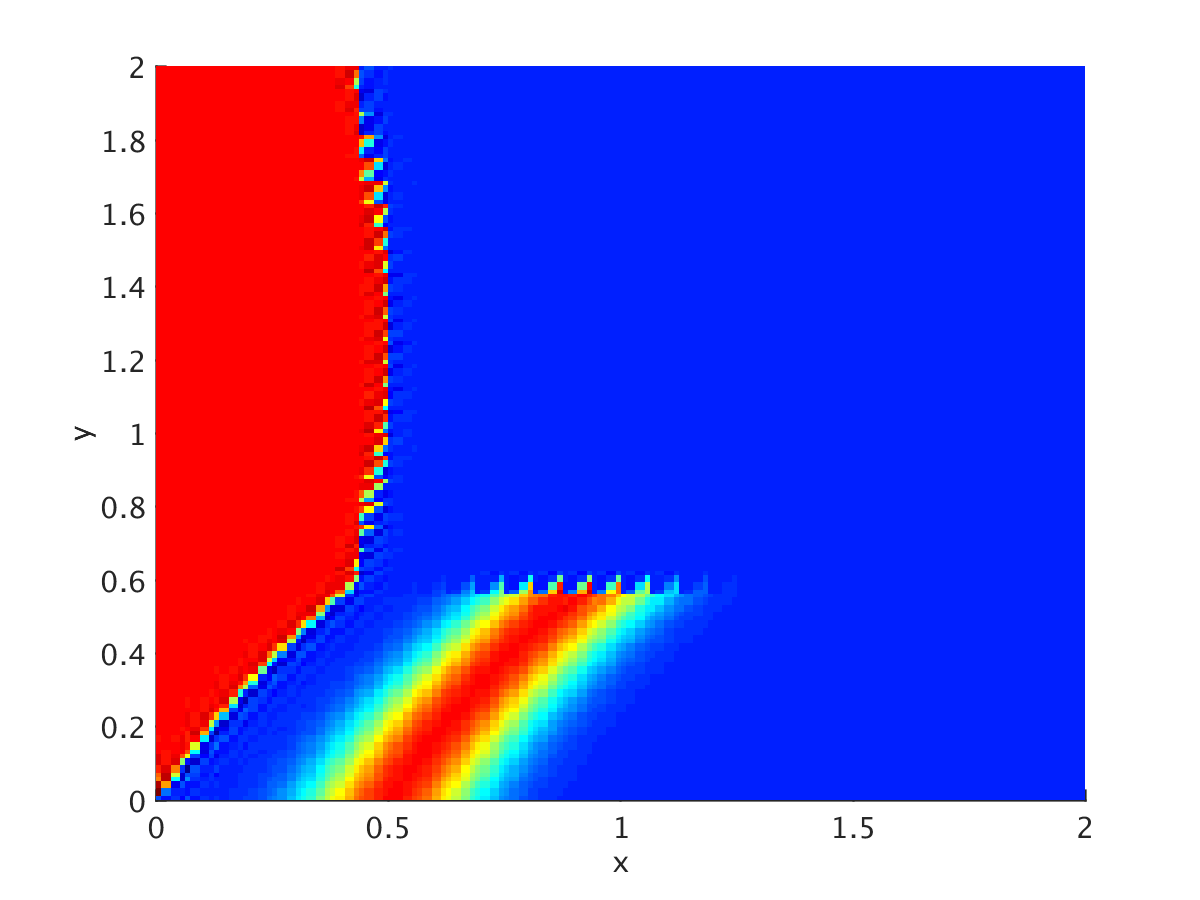}
  }
  \subfigure[$\uk$ at $k = 48$]{
    \includegraphics[trim=1.0cm 0.5cm 1.5cm 1cm,clip=true,width=0.3\columnwidth]{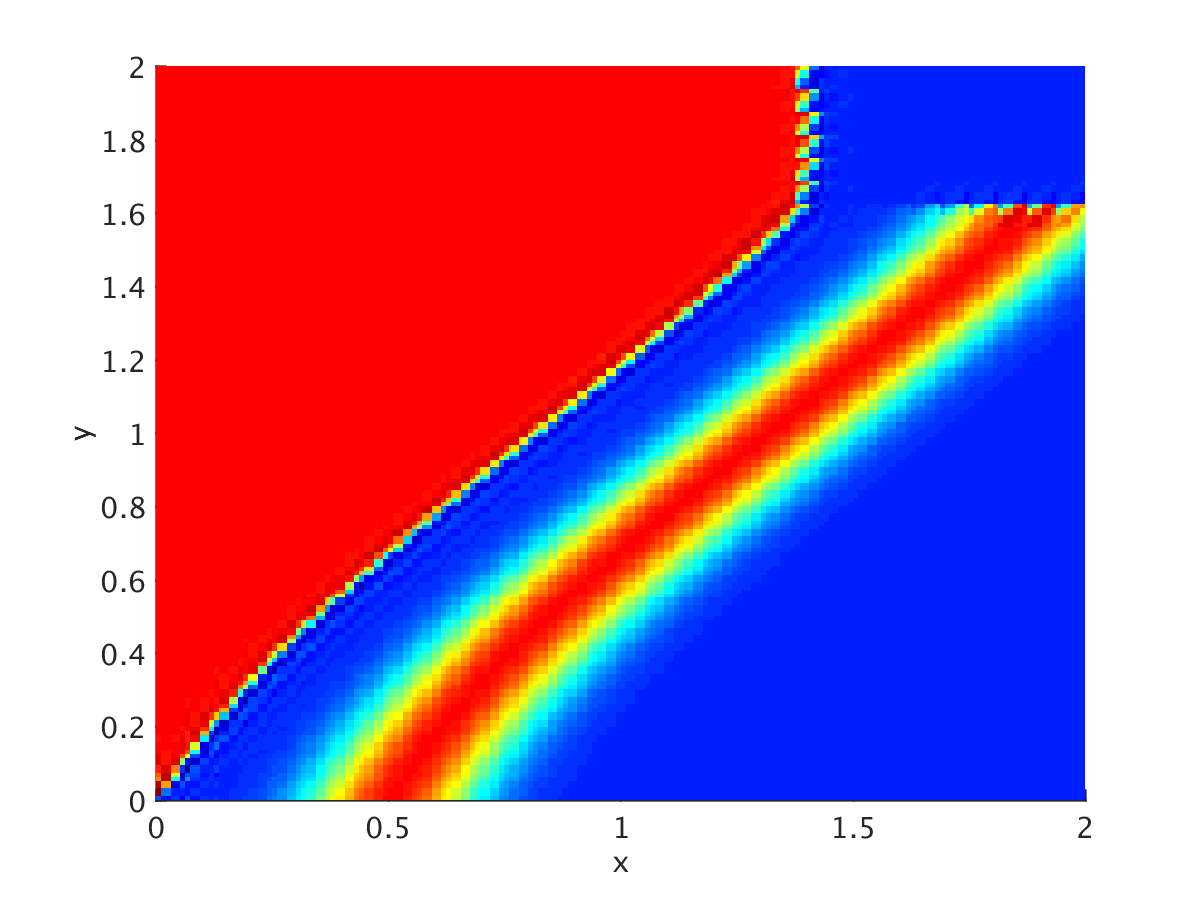}
  }
  \subfigure[$\uk$ at $k = 64$]{
    \includegraphics[trim=1.0cm 0.5cm 1.5cm 1cm,clip=true,width=0.3\columnwidth]{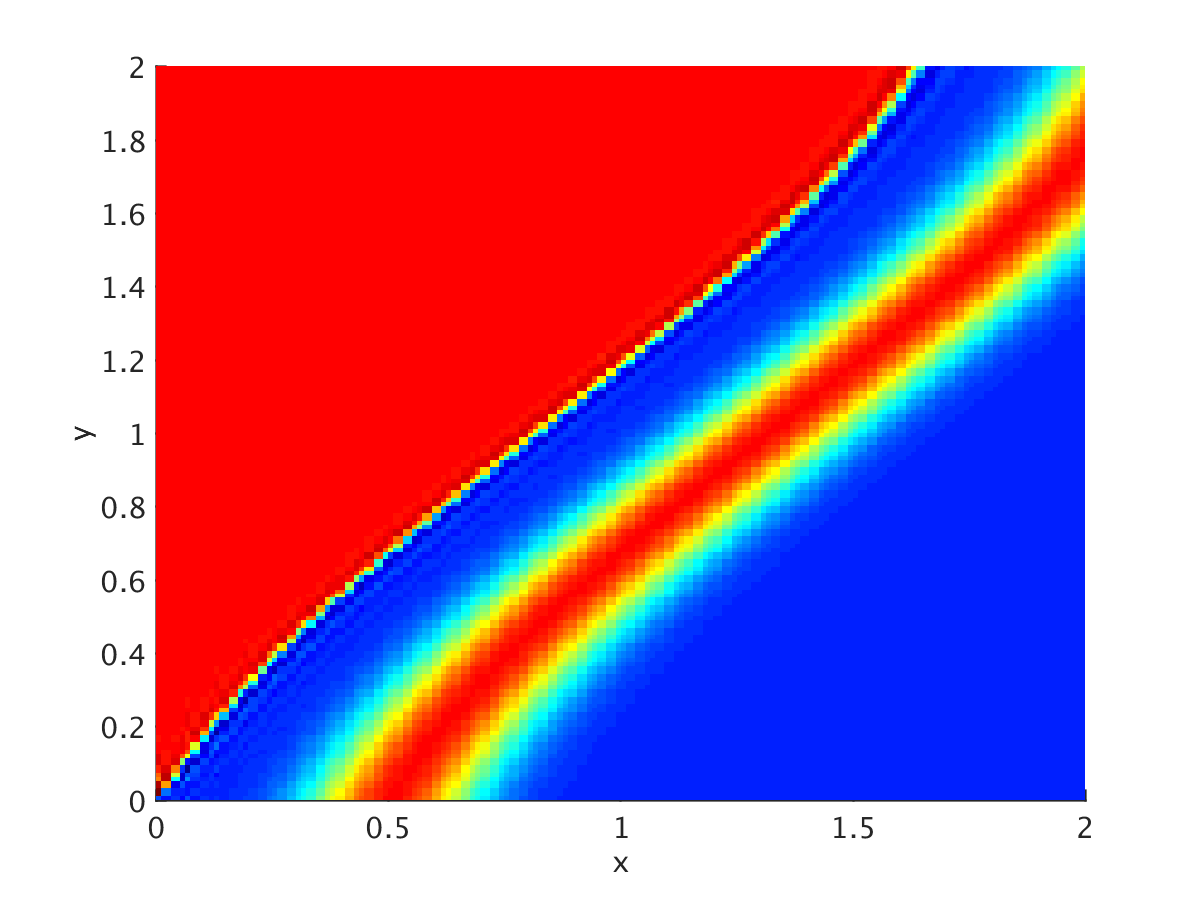}
  }
  \caption{Evolution of the iterative solution for the 2D transport equation using the iHDG-II algorithm.}
  \figlab{shocksoln}
\end{figure}

\begin{table}[h!b!t!]
\begin{center} 
\caption{The number of iterations taken by the
  iHDG-II algorithm for the transport equation in 2D and 3D settings.}
\label{tab:2d_discont_3d_smooth}
\begin{tabular}{ | r || r || r || c | c | }
\hline
{$N_{el}$(2D)} & {$N_{el}$(3D)} & {$p$} & {2D solution} & {3D solution} \\
\hline
4x4 & 2x2x2 & 1 & 9 & 6 \\
\hline
8x8 & 4x4x4 & 1 & 17 & 12 \\
\hline
16x16 & 8x8x8 & 1 & 33 & 23 \\
\hline
32x32 & 16x16x16 & 1 & 65 & 47 \\
\hline
\hline
4x4 & 2x2x2 & 2 & 9 & 6 \\
\hline
8x8 & 4x4x4 & 2 & 17 & 12 \\
\hline
16x16 & 8x8x8 & 2 & 33 & 23 \\
\hline
32x32 & 16x16x16 & 2 & 65 & 47 \\
\hline
\hline
4x4 & 2x2x2 & 3 & 9 & 7 \\
\hline
8x8 & 4x4x4 & 3 & 17 & 12 \\
\hline
16x16 & 8x8x8 & 3 & 33 & 23 \\
\hline
32x32 & 16x16x16 & 3 & 65 & 47 \\
\hline
\hline
4x4 & 2x2x2 & 4 & 9 & 6 \\
\hline
8x8 & 4x4x4 & 4 & 17 & 12 \\
\hline
16x16 & 8x8x8 & 4 & 33 & 24 \\
\hline
32x32 & 16x16x16 & 4 & 64 & 48 \\
\hline
\end{tabular}
\end{center}
\end{table}

Table \ref{tab:strong scaling} shows strong scaling 
results for the 3D transport problem. 
The parallel efficiency is over $90\%$ for all the cases
except for the case where we use 16,384 cores and 16 elements per core whose efficiency is $59\%$. This is due to the fact that, with 16 elements
per core, the communication cost dominates the computation.
Table \ref{tab:weak scaling} shows the weak scaling with
1024 and 128 elements/core. Since the number of iterations increases linearly with the number of elements,
we can see a similar increase in time when we increase
the number of elements, and hence cores.

Let  us now  consider the time dependent 3D transport equation
with the following exact solution
\[
	u^e=e^{-5((x-0.35t)^2+(y-0.35t)^2+(z-0.35t)^2)},
\]
where the velocity field is chosen to be $\betab=(0.2,0.2,0.2)$. In
Figure \ref{CFL_vs_k} are the numbers of iHDG iterations taken per
time step to converge versus the $CFL$ number. As can be seen, for
$CFL$ in the range $\LRs{1,5}$ the number of iterations grows
mildly. As a result, we get a much better weak scaling results in
Table \ref{tab:weak scaling time} in comparison to the steady state
case in Table \ref{tab:weak scaling}.

\begin{table}[h!b!t!]
\begin{center} 
\caption{Strong scaling results on TACC's Stampede system for the 3D transport problem.}
\label{tab:strong scaling}
\begin{tabular}{ | r | c | c | c | }
\hline
\multicolumn{4}{|c|}{$N_{el}=262,144$, $p=4$, dof=32.768 million, Iterations=190} \\
\hline
{\#cores} & {time [s]} & {$N_{el}$/core} & {efficiency [\%]} \\
\hline
64 & 1758.62 & 4096 & 100.0\\
\hline
128 & 883.88 & 2048 & 99.5 \\
\hline
256 & 439.94 & 1024 & 99.9 \\
\hline
512 & 228.69 & 512 & 96.1 \\
\hline
1024 & 113.87 & 256 & 96.5 \\
\hline
2048 & 56.36 & 128 & 97.5 \\
\hline
4096 & 29.26 & 64 & 91.8 \\
\hline
16384 & 11.38 & 16 & 59 \\
\hline
\multicolumn{4}{|c|}{$N_{el}=2,097,152$, $p=4$, dof=262.144 million, Iterations=382} \\
\hline
{\#cores} & {time [s]} & {$N_{el}$/core} & {efficiency [\%]} \\
\hline
512 & 3634.89 & 4096 & 100.0 \\
\hline
1024 & 1788.78 & 2048 & 101.5 \\
\hline
2048 & 932.495 & 1024 & 97.3 \\
\hline
4096 & 447.337 & 512 & 101.5 \\
\hline
8192 & 232.019 & 256 & 97.9 \\
\hline
16384 & 117.985 & 128 & 92.9 \\
\hline
\end{tabular}
\end{center}
\end{table}

\begin{table}[h!b!t!]
\begin{center} 
\caption{Weak scaling results on TACC's Stampede system for the 3D transport problem.}
\label{tab:weak scaling}
\begin{tabular}{ | r | c | c | c | }
\hline
\multicolumn{4}{|c|}{1024 $N_{el}$/core, $p=4$} \\
\hline
{\#cores} & {time [s]} & {time ratio} & {Iterations ratio} \\
\hline
4 & 103.93 & 1 & 1 \\
\hline
32 & 217.23 & 2.1 & 2 \\
\hline
256 & 439.94 & 4.2 & 4 \\
\hline
2048 & 932.49 & 8.9 & 8 \\
\hline
\multicolumn{4}{|c|}{128 $N_{el}$/core, $p=4$} \\
\hline
{\#cores} & {time [s]} & {time ratio} & {Iterations ratio} \\
\hline
4 & 6.52 & 1 & 1 \\
\hline
32 & 13.68 & 2.1 & 2 \\
\hline
256 & 27.71 & 4.2 & 4 \\
\hline
2048 & 56.37 & 8.6 & 8 \\
\hline
\end{tabular}
\end{center}
\end{table}

\begin{figure}[h!b!t!]
\centering
\includegraphics[trim=2cm 6.5cm 2cm 7.5cm,clip=true,width=0.7\textwidth]{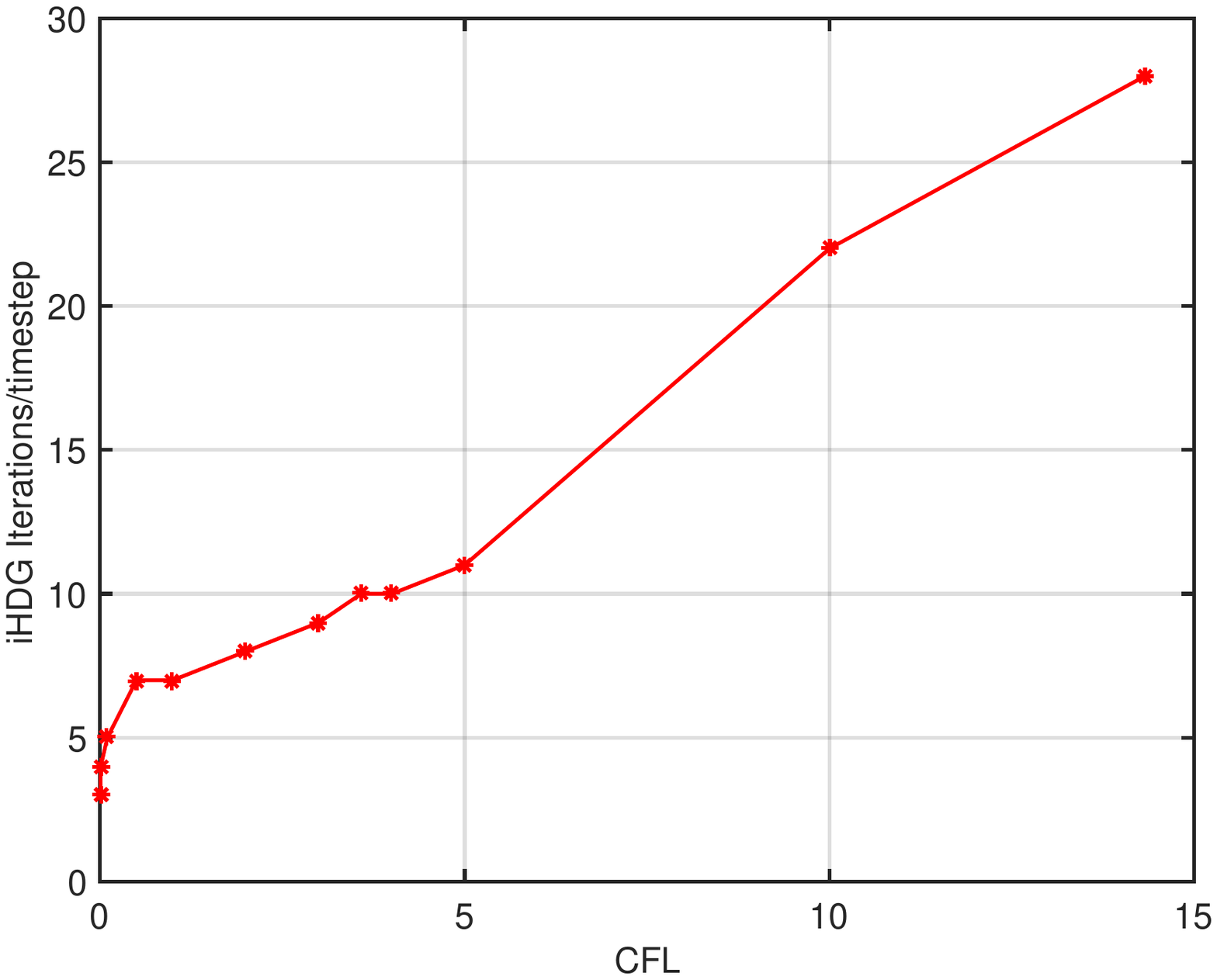}
\caption{CFL versus Iterations for the 3D time dependent transport}
\label{CFL_vs_k}
\end{figure}

\begin{table}[h!b!t!]
\begin{center} 
\caption{Weak scaling results on TACC's Stampede system for the 3D time dependent transport problem.}
\label{tab:weak scaling time}
\begin{tabular}{ | r | c | c | c | c | }
\hline
\multicolumn{5}{|c|}{128 $N_{el}$/core, $\p=4$, $\Delta t$=0.01, $|\betab|_{max}$=0.35} \\
\hline
{\#cores} & {time/timestep [s]} & {time ratio} & {Iterations ratio} & {CFL} \\
\hline
4 & 1.69 & 1 & 1 & 0.45 \\
\hline
32 & 1.91 & 1.1 & 1.1 & 0.9 \\
\hline
256 & 2.09 & 1.2 & 1.1 & 1.8 \\
\hline
2048 & 2.72 & 1.6 & 1.4 & 3.6 \\
\hline
16384 & 4.68 & 2.8 & 2.1 & 7.2 \\
\hline
\end{tabular}
\end{center}
\end{table}

\subsection{Linearized shallow water equations}
The goal of this section is to verify the theoretical
findings in section \ref{iiHDG_shallow}. To that extent, let us consider equation \eqnref{linearizedShallow} with a linear
standing wave, for which, $\Phi=1$, $f=0$, $\gamma=0$ (zero bottom friction),
$\taub=0$ (zero wind stress). The domain is $[0, 1]\times[0, 1]$ and
the wall boundary condition is applied on the domain boundary.
The following exact
solution \cite{GiraldoWarburton08} is taken
\begin{subequations}
\eqnlab{shallowexact}
\begin{align}
    \phi^{e}&=\cos(\pi x)\cos(\pi y)\cos(\sqrt{2} \pi t), \\
    u^{e}&=\frac{1}{\sqrt{2}}\sin(\pi x)\cos(\pi y)\sin(\sqrt{2} \pi t), \\
    v^{e}&=\frac{1}{\sqrt{2}}\cos(\pi x)\sin(\pi y)\sin(\sqrt{2} \pi t). 
\end{align}
\end{subequations}
We use Crank-Nicolson method for the temporal discretization and the iHDG-II approach
for the spatial discretization. 
In Table
\ref{tab:Comparison_iHDG_iiHDG_shallow_water} we compare the number of
iterations taken by iHDG-I and iHDG-II methods for two different time
steps $\Delta t = 0.1$ and $\Delta t = 0.01$. 
Here, ``$*$'' indicates divergence. As can be seen from the
third and fourth columns, the iHDG-I method diverges for finer meshes and/or
larger time steps. This is consistent with the findings in section
\ref{iiHDG_shallow} where the divergence is expected because of
the lack of mesh independent stability in the velocity. On the
contrary, iHDG-II converges for all cases. 


\begin{table}[h!b!t!]
\begin{center}
\caption{Comparison of iHDG-I and iHDG-II
for the shallow water equations.}
\label{tab:Comparison_iHDG_iiHDG_shallow_water}
\begin{tabular}{ | r || r || c | c | c | c | }
\hline
\multirow{2}{*}{$N_{el}$} & \multirow{2}{*}{$p$} & \multicolumn{2}{|c|}{iHDG-I} & \multicolumn{2}{|c|}{iHDG-II} \\
& & $\Delta t=10^{-1}$ & $\Delta t=10^{-2}$ & $\Delta t=10^{-1}$ & $\Delta t=10^{-2}$ \\
\hline
16 & 1 & 19 & 6 & 14 & 6 \\
\hline
64 & 1 & * & 6 & 18 & 9 \\
\hline
256 & 1 &* & 7 & 32 & 10 \\
\hline
1024 & 1 & * & 9 & 59 & 8 \\
\hline
\hline
16 & 2 & * & 9 & 15 & 9 \\
\hline
64 & 2 &  * & 11 & 19 & 9 \\
\hline
256 & 2 & * & 13 & 32 & 11 \\
\hline
1024 & 2 & * & 15 & 59 & 12 \\
\hline
\hline
16 & 3 & * & 7 & 16 & 8 \\
\hline
64 & 3 & * & 9 & 20 & 8 \\
\hline
256 & 3 & * & 12 & 31 & 10 \\
\hline
1024 & 3 & * & * & 59 & 12 \\
\hline
\hline
16 & 4 &  * & 10 & 17 & 9 \\
\hline
64 & 4 & * & 12 & 32 & 10 \\
\hline
256 & 4 & * & * & 60 & 9 \\
\hline
1024 & 4 & * & * & 112 & 13 \\
\hline
\end{tabular}
\end{center}
\end{table}


\begin{figure}[h!b!t!]
\begin{minipage}{0.5\linewidth}
\centering
\includegraphics[trim=1.5cm 5cm 2cm 6cm,clip=true,width=0.8\textwidth]{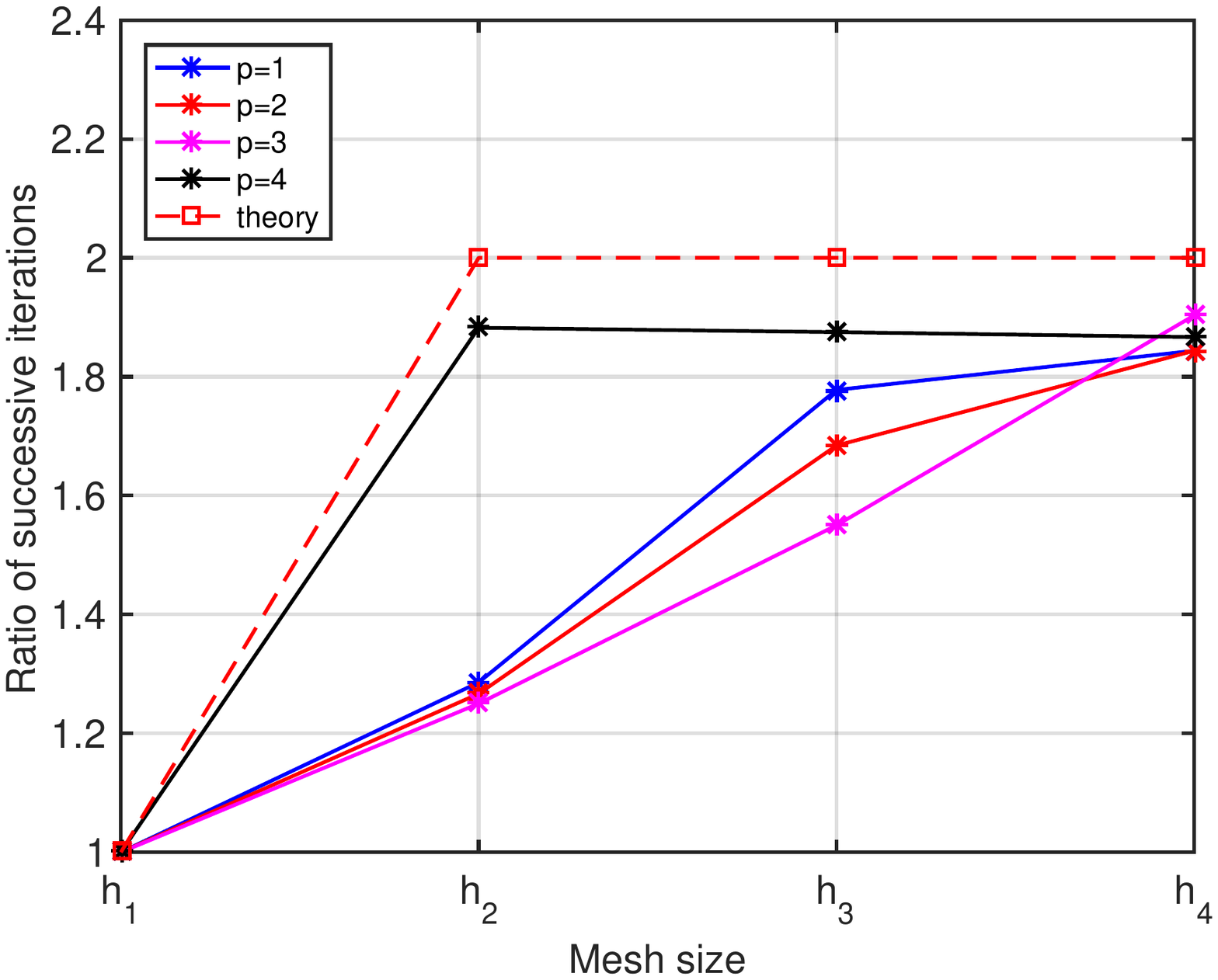}
\end{minipage}
\begin{minipage}{0.5\linewidth}
\begin{center}
\begin{tabular}{ | r || c | c | c | c | c | }
\hline
\multirow{2}{*}{$\p$} & \multicolumn{4}{|c|}{iHDG-II} & \multirow{2}{*}{Asymptotics} \\
& $h_1$ & $h_2$ & $h_3$ & $h_4$ & \\
\hline
$2$ & 1.07 & 1.06 & 1 & 1 & 2\\
\hline
$3$ & 1.14 & 1.11 & 0.97 & 1 & 3.33 \\
\hline
$4$ & 1.21 & 1.78 & 1.87 & 1.9 & 5 \\
\hline
\end{tabular}
\end{center}
\end{minipage}
\caption{Growth of iterations with mesh size $h$ (left) and solution order $p$ (right) for the iHDG-II method for the shallow water equation.}\label{tab:p_var}
\end{figure}

In Table \ref{tab:Comparison_iHDG_iiHDG_shallow_water}, we use a
series of structured quadrilateral meshes with uniform refinements
such that the ratio of successive mesh sizes is $1/2$. The asymptotic
result \eqref{khp_estimate}, which is valid for $\frac{h}{\Delta t
  (p+1)(p+2)\sqrt{\Phi}}\ll 1$, predicts that the ratio of the number
of iterations required by successive refined meshes is $2$, and the
results in Figure \ref{tab:p_var} confirm this prediction. The last two columns of Table
\ref{tab:Comparison_iHDG_iiHDG_shallow_water} also confirms the
asymptotic result \eqref{khp_estimate} that the number of iHDG-II iterations scales linearly with the time stepsize.

Next, we study the iHDG iteration growth as the
solution order $p$ increases. 
The asymptotic result \eqref{khp_estimate} predicts that $k = {O}(p^2)$. In Table 
\ref{tab:p_var}, rows 2--4 show the ratio of the number of iterations taken for 
solution orders $p = \LRc{2,3,4}$ over the one for $p = 1$ for four different 
meshsizes as in Table \ref{tab:Comparison_iHDG_iiHDG_shallow_water}. As can be seen, the
theoretical estimation is conservative.


\subsection{Convection-diffusion equation}
\seclab{CDR_numerical}
In this section the following exact solution for equation \eqnref{Convection-diffusion-eqn} is considered 
$$u^e=\frac{1}{\pi}\sin(\pi x)\cos(\pi y)\sin(\pi z).$$ The forcing is
chosen such that it corresponds to the exact solution. The velocity field is chosen as $\betab=(1+z,1+x,1+y)$ 
and we take  $\nu=1$. The domain is $[0,1]\times[0,1]\times[0,1]$. A structured hexahedral mesh is used for the simulations. The stopping criteria based on the exact solution is used as in the 
previous sections.

In Table \ref{tab:Comparison_iHDG_iiHDG_CDR} we report the number of iterations taken by iHDG-I and iHDG-II methods for different values of the diffusion coefficent $\kappa$. Similar to the shallow water
equations, due to the lack of stability in $\sigb$, iHDG-I diverges when $\kappa$
is large for fine meshes and/or high solution orders. The iHDG-II method, on the other hand,
converges for all the meshes and solution orders, and the number of iterations are smaller  than that of the iHDG-I method. Next, we verify the growth of iHDG-II iterations predicted by the asymptotic result \eqnref{khp_estimate_CDR}.

Since $\min\LRc{\kappa^{-1},\lambda} = \lambda$
  for all the numerical results considered here, due to
  \eqnref{khp_estimate_CDR} we expect the number of iHDG-II iterations to be independent of $\kappa$ and this can be verified in Table
  \ref{tab:Comparison_iHDG_iiHDG_CDR}. In Figure \ref{h_var_CDR} the growth of
  iterations with respect to mesh size $h$ for different $\kappa$ are compared. In the asymptotic
  limit, for all the cases, the ratio of successive iterations reaches
  a value of around $1.7$ which is close to the theoretical prediction
  $2$. 
  Hence the theoretical analysis predicts well the growth of
  iterations with respect to the mesh size $h$. On the other hand, columns $6-8$
  in Table \ref{tab:Comparison_iHDG_iiHDG_CDR} show that the iterations are
  almost independent of solution orders. This is not predicted by the
  theoretical results which indicates that  the number of  iterations scales like  $\mc{O}(p^2)$. The reason is due to the convection
  dominated nature of the problem, for which we have shown that the number of iterations is independent of  the
  solution order.

Finally, we consider the elliptic regime with $\kappa=1$ and
$\betab=0$. For this case we use the following stopping criteria based on
the direct solver solution $\u_{direct}$
\[
    \nor{\u^{k}-\u_{direct}}_{L^2}<10^{-10}.
\]
As shown in Figure \ref{tab:h_p_var_elliptic}, our theoretical
analysis predicts well the relation between the number of iterations and the mesh size and the solution order.  

 \begin{table}[h!b!t!]
 \begin{center}
     \caption{Comparison of iHDG-I and iHDG-II methods for different $\kappa$.} 
 \label{tab:Comparison_iHDG_iiHDG_CDR}
 \begin{tabular}{ | r || r || c | c | c | c | c | c | }
 \hline
 \multirow{2}{*}{$h$} & \multirow{2}{*}{$p$} & \multicolumn{3}{|c|}{iHDG-I} & \multicolumn{3}{|c|}{iHDG-II} \\
 & & $\kappa=10^{-2}$ & $\kappa=10^{-3}$ & $\kappa=10^{-6}$ & $\kappa=10^{-2}$ & $\kappa=10^{-3}$ & $\kappa=10^{-6}$ \\
 \hline
 0.5 & 1 & 24 & 23 & 23 & 17 & 17 & 17 \\
 \hline
 0.25 & 1 & 30 & 34 & 35 & 25 & 25 & 26 \\
 \hline
 0.125 & 1 & 50 & 55 & 56 & 35 & 37 & 38 \\
 \hline
 0.0625 & 1 & 90 & 94 & 97 & 62 & 64 & 65 \\
 \hline
 \hline
 0.5 & 2 & 26 & 24 & 25 & 17 & 19 & 19 \\
 \hline
 0.25 & 2 & 41 & 42 & 42 & 27 & 27 & 27 \\
 \hline
 0.125 & 2 & 66 & 67 & 67 & 42 & 43 & 43 \\
 \hline
 0.0625 & 2 & * & 109 & 110 & 67 & 70 & 71 \\
 \hline
 \hline
 0.5 & 3 & 27 & 31 & 31 & 19 & 19 & 19 \\
 \hline
 0.25 & 3 & 33 & 33 & 38 & 24 & 26 & 27 \\
 \hline
 0.125 & 3 & * & 58 & 60 & 38 & 39 & 41 \\
 \hline
 0.0625 & 3 & * & 102 & 106 & 69 & 69 & 71 \\
 \hline
 \hline
 0.5 & 4 & 26 & 27 & 27 & 17 & 19 & 19 \\
 \hline
 0.25 & 4 & 50 & 41 & 43 & 26 & 27 & 27 \\
 \hline
 0.125 & 4 & * & 71 & 72 & 42 & 45 & 46 \\
 \hline
 0.0625 & 4 & * & 123 & 125 & 73 & 78 & 79 \\
 \hline
 \end{tabular}
 \end{center}
 \end{table}

\begin{figure}[h!t]
\vspace{-5mm}
\subfigure[$\kappa = 10^{-2}$]{
    \includegraphics[trim=1.5cm 6cm 2cm 6cm,clip=true,width=0.3\textwidth]{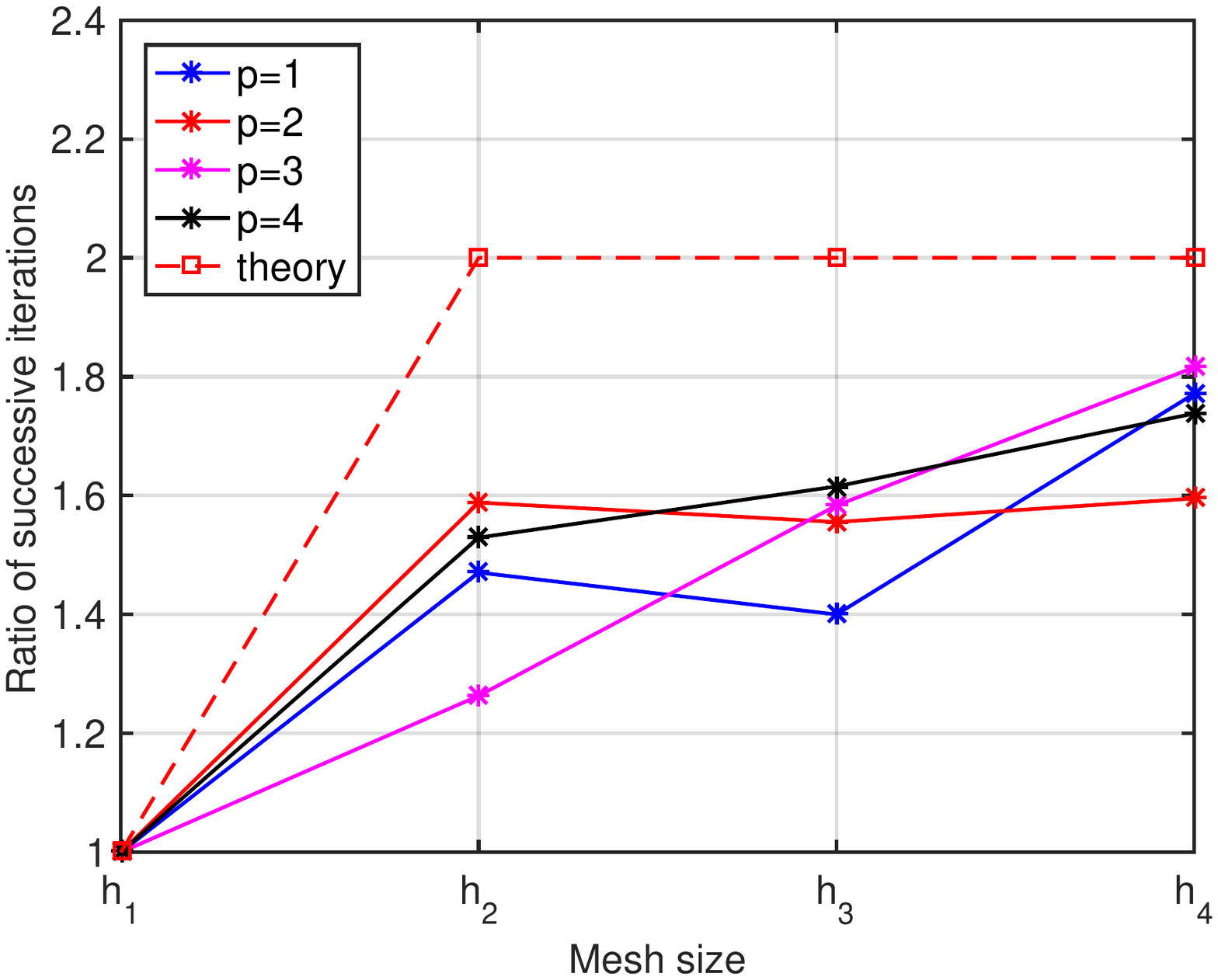}
    \label{k1em2}
}
\subfigure[$\kappa = 10^{-3}$]{
    \includegraphics[trim=1.5cm 6cm 2cm 6cm,clip=true,width=0.3\textwidth]{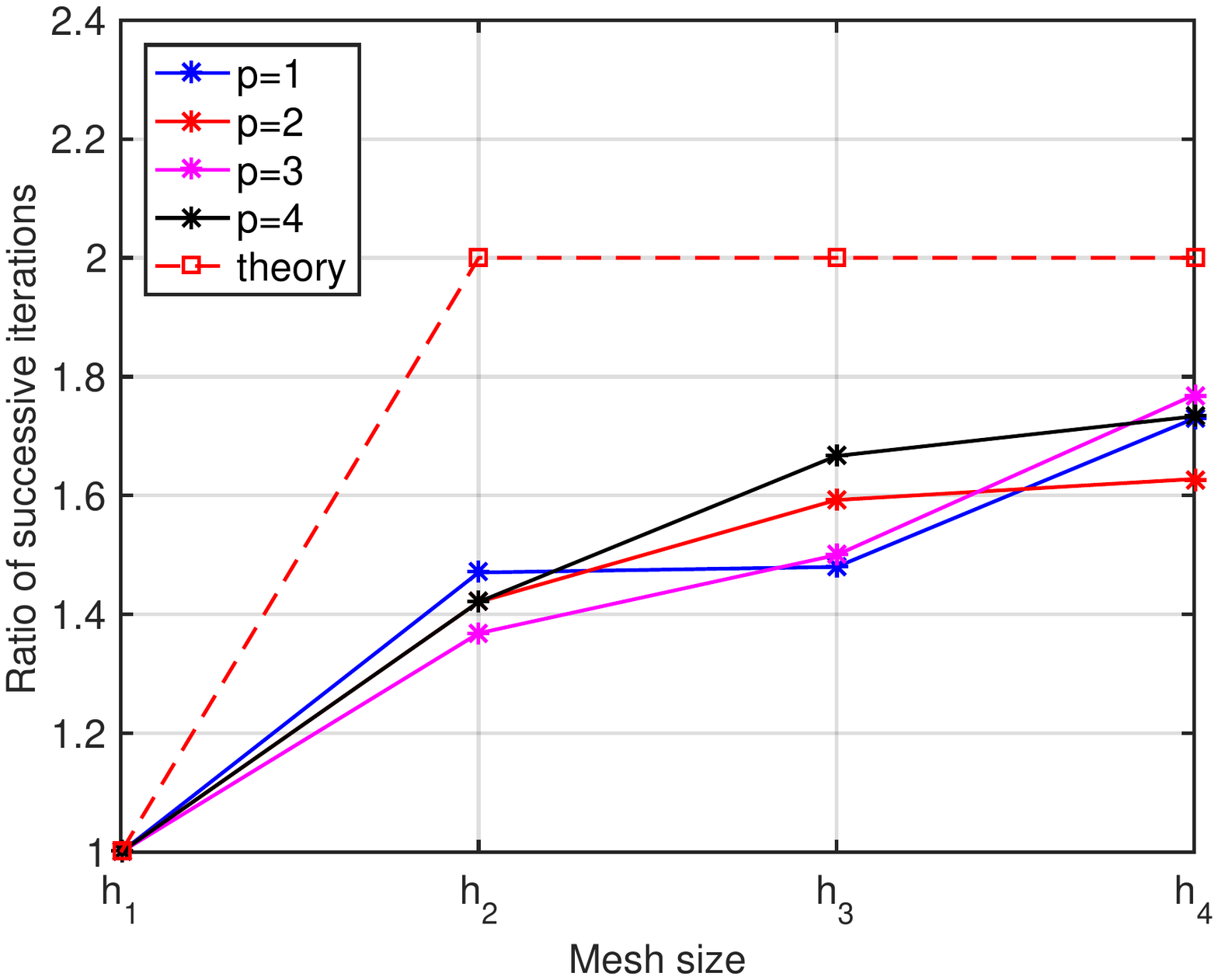}
\label{k1em3}
}
\subfigure[$\kappa = 10^{-6}$]{
    \includegraphics[trim=1.5cm 6cm 2cm 6cm,clip=true,width=0.3\textwidth]{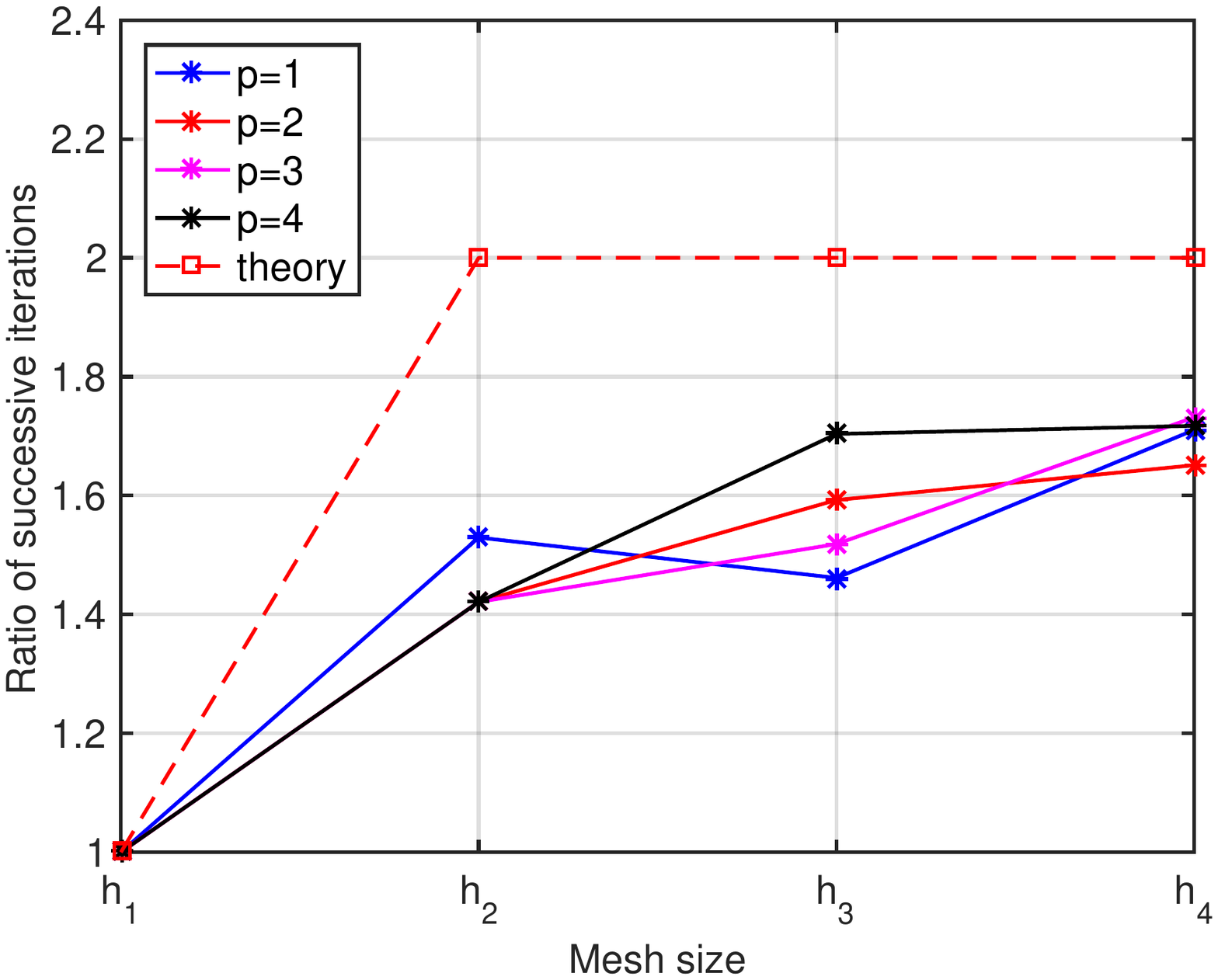}
\label{k1em6}
}
\caption{Ratio of successive iterations as we refine the
mesh for the iHDG-II method for different $\kappa$.} 
\label{h_var_CDR}
\end{figure}

\begin{figure}[h!b!t!]
\begin{minipage}{0.5\linewidth}
\centering
    \includegraphics[trim=1.5cm 6cm 2cm 6cm,clip=true,width=0.8\textwidth]{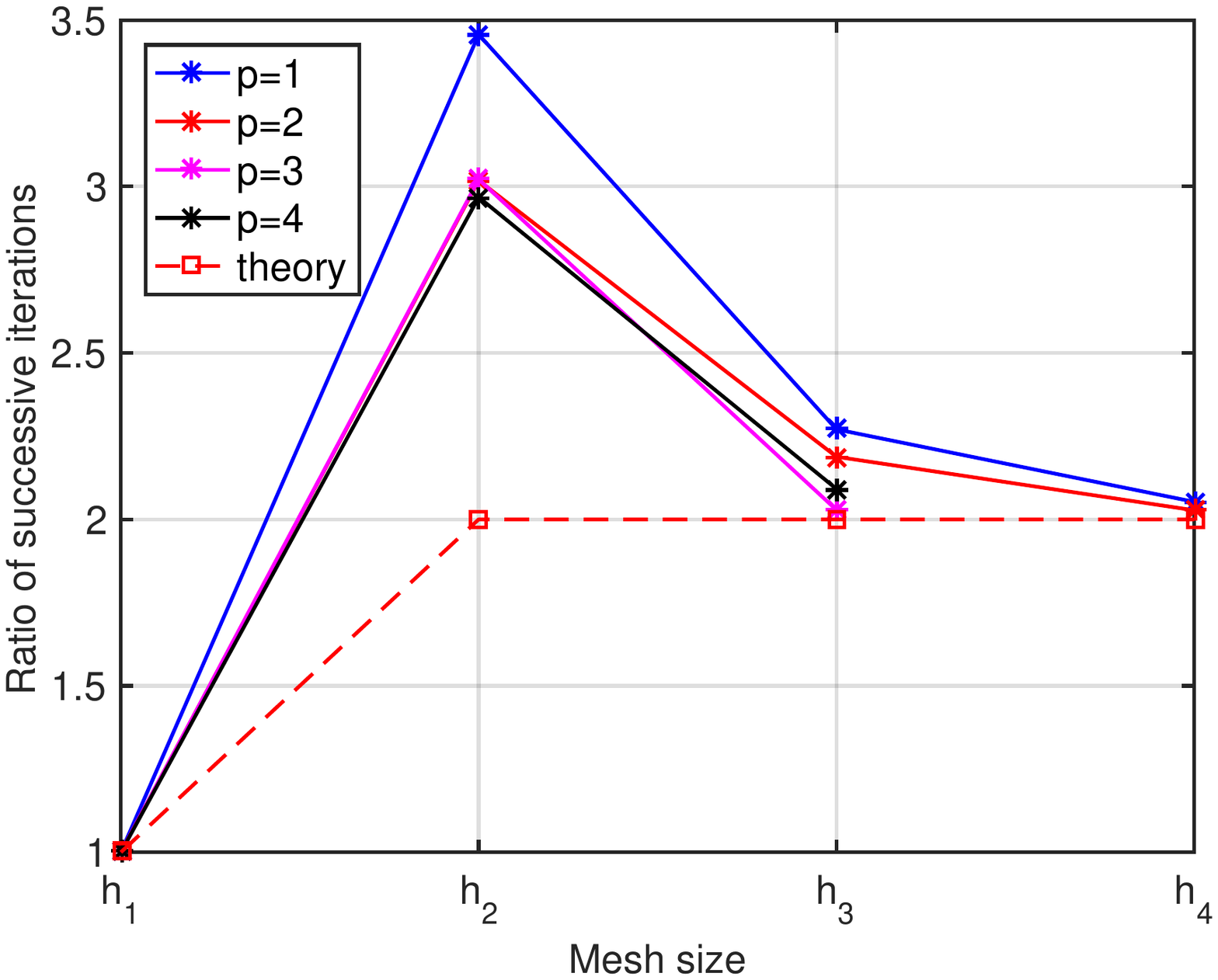}
\end{minipage}
\begin{minipage}{0.5\linewidth}
\begin{center}
\begin{tabular}{ | r || c | c | c | c | }
\hline
\multirow{2}{*}{$\p$} & \multicolumn{3}{|c|}{iHDG-II} & \multirow{2}{*}{Asymptotics} \\
& $h_1$ & $h_2$ & $h_3$ & \\
\hline
$2$ & 2.41 & 2.11 & 2.03 & 2\\
\hline
$3$ & 4.07 & 3.55 & 3.17 & 3.33 \\
\hline
$4$ & 6.09 & 5.23 & 4.81 & 5 \\
\hline
\end{tabular}
\end{center}
\end{minipage}
\caption{Growth of iterations with mesh size $h$ (left) and solution order $p$ (right) for the iHDG-II method for the elliptic equation.}\label{tab:h_p_var_elliptic}
\end{figure}

\section{Conclusions}
\seclab{conclusions}

We have presented an iterative HDG approach which improves upon our
previous work \cite{iHDG} in several aspects.  In particular, it
converges in a finite number of iterations for the scalar transport
equation and is unconditionally convergent for both the linearized shallow
water system and the convection-diffusion equation. Moreover, compared to
our previous work \cite{iHDG}, we provide several additional findings:
1) we make a connection between iHDG and the parareal method, which
reveals interesting similarities and differences between the two
methods; 2) we show that iHDG can be considered as a {\em locally
  implicit} method, and hence being somewhat in between fully explicit
and fully implicit approaches; 3) for both the linearized shallow water system
and the convection-diffusion equation, using an asymptotic approximation,
we uncover a relationship between the number of iterations and time
stepsize, solution order, meshsize and the equation parameters. This
allows us to choose the time stepsize such that the number of
iterations is approximately independent of the solution order and the
meshsize; 4) we show that iHDG-II has improved stability and
convergence rates over iHDG-I; and 5) we provide both strong and weak
scalings of our iHDG approach up to $16,384$ cores. Ongoing work is
to develop a preconditioner to reduce the number of iterations as the
mesh is refined. Equally important is to exploit the iHDG approach
with small number of iterations as a preconditioner.


\section*{Acknowledgements}
 We are indebted to Professor Hari Sundar for sharing his high-order
finite element library {\bf homg}, on which we have implemented the iHDG algorithms and produced numerical results. We thank Texas Advanced Computing Center (TACC) for processing our requests so quickly regarding running our simulations on Stampede.
The first author would like to thank Stephen Shannon for helping in proving some results and various fruitful discussions on this topic. We thank Professor Martin Gander 
for pointing us to the reference \cite{gander2008analysis}. The first author would like to dedicate this work in memory of his grandfather Gopalakrishnan Thiruvenkadam. 

\section*{Appendix A. Proof of well-posedness of local solver of the iHDG-II method for the convection-diffusion equation}
\seclab{appendixa}
\begin{proof}
Choosing $\sigbkp$ and $\ukp$ as test functions in
\eqref{ErrorDDMCD1}-\eqref{ErrorDDMCD2}, integrating the second term
in \eqref{ErrorDDMCD1} by parts, using \eqnref{Identity} for second term in \eqref{ErrorDDMCD2}, 
and then summing up the resulting two equations we obtain
\begin{multline}
    \kappa^{-1}\LRp{\sigb^{k+1},\sigb^{k+1}}_K+\LRp{\LRc{\nu-\frac{\Div\betab}{2}}\ukp,\ukp}_K\\
    +\LRa{\LRp{\frac{\betab\cdot{\bf n}}{2}+\tau}\ukp,\ukp}_\pK
    +\LRa{(\sigb^{k+1}\cdot{\bf n}-\tau\ukp),\uh^{k,k+1}}_\pK=0.
\label{ErrorDDMCD4}
\end{multline}
Substituting \eqref{uhatCDR} in the above equation and simplifying some terms we get,
\begin{align}\nonumber
    &\sum_K\kappa^{-1}\nor{\LRp{\sigbkp}^-}_K^2+\LRp{\LRc{\nu-\frac{\Div\betab}{2}}\LRp{\ukp}^-,\LRp{\ukp}^-}_K\\\nonumber
    &+\LRa{\LRc{\frac{|\betab\cdot\n|^2+2}{2\alpha}}\LRp{\ukp}^-,\LRp{\ukp}^-}_\pK
    +\LRa{\frac{1}{\alpha}\LRp{\sigbkp\cdot\n}^-,\LRp{\sigbkp\cdot\n}^-}_\pK\\\nonumber
    &+\LRa{\frac{\betab\cdot\n^-}{\alpha}\LRp{\ukp}^-,\LRp{\sigbkp\cdot\n}^-}_\pK
=\sum_\pK-\LRa{\frac{1}{\alpha}\LRp{\sigbkp\cdot\n}^-,\LRp{\sigbk\cdot\n}^+}_\pK\\\nonumber
&-\LRa{\LRc{\frac{\betab\cdot\n^{+}+\tau^+}{\alpha}}\LRp{\sigbkp\cdot\n}^-,\LRp{\uk}^+}_\pK+\LRa{\frac{\tau^-}{\alpha}\LRp{\ukp}^-,\LRp{\sigbk\cdot\n}^+}_\pK\\\label{ErrorDDMCD5}
&+\LRa{\LRc{\frac{\tau^-(\betab\cdot\n^++\tau^+)}{\alpha}}\LRp{\ukp}^-,\LRp{\uk}^+}_\pK.
\end{align}
Using the identity
\begin{multline}
    \LRa{\frac{\betab\cdot\n}{\alpha}\ukp,\sigb^{k+1}\cdot\n}_\pK=\nor{\frac{1}{\sqrt{2\alpha}}\LRp{\betab\cdot\n\ukp+\sigb^{k+1}\cdot\n}}_\pK^2\\
    -\LRa{\frac{\betab\cdot\n^2}{2\alpha}\ukp,\ukp}_\pK
-\LRa{\frac{1}{2\alpha}\sigb^{k+1}\cdot\n,\sigb^{k+1}\cdot\n}_\pK,
\label{completesquare}
\end{multline}
and the coercivity condition \eqref{CondCoercivity} we
can write \eqref{ErrorDDMCD5} as
\begin{align}\nonumber
    &\sum_K\kappa^{-1}\nor{\LRp{\sigbkp}^-}_K^2+\lambda\nor{\LRp{\ukp}^-}_K^2
    +\nor{\LRp{\ukp}^-}_{1/\alpha,\pK}^2\\\nonumber
    &+\nor{\LRp{\sigbkp\cdot\n}^-}_{1/2\alpha,\pK}^2
    +\nor{\frac{1}{\sqrt{2\alpha}}\LRc{\betab\cdot\n^-\LRp{\ukp}^-+\LRp{\sigbkp\cdot\n}^-}}_\pK^2
\leq \\\nonumber
&\sum_\pK-\LRa{\frac{1}{\alpha}\LRp{\sigbkp\cdot\n}^-,\LRp{\sigbk\cdot\n}^+}_\pK
-\LRa{\LRc{\frac{\betab\cdot\n^{+}+\tau^+}{\alpha}}\LRp{\sigbkp\cdot\n}^-,\LRp{\uk}^+}_\pK\\\label{wellposedcdr}
&+\LRa{\frac{\tau^-}{\alpha}\LRp{\ukp}^-,\LRp{\sigbk\cdot\n}^+}_\pK+\LRa{\LRc{\frac{\tau^-(\betab\cdot\n^++\tau^+)}{\alpha}}\LRp{\ukp}^-,\LRp{\uk}^+}_\pK.
\end{align}
Since all the terms on the left hand side are positive, when we take the ``forcing" to the local solver $\LRc{\LRp{\uk}^+,\LRp{\sigbk}^+}=\LRc{0,{\bf 0}}$, the only solution possible is 
$\LRc{\LRp{\ukp}^-,\LRp{\sigbkp}^-}=\LRc{0,{\bf 0}}$ and hence the method is well-posed. 
\end{proof}

\section*{Appendix B. Proof of convergence of the iHDG-II method for the convection-diffusion equation}
\seclab{appendixb}
\begin{proof}
    In equation \eqref{wellposedcdr} omitting the last term on the left hand side and using Cauchy-Schwarz and Young's inequalities for the terms on the right hand side we get
\begin{align}\nonumber
    &\sum_K\kappa^{-1}\nor{\LRp{\sigbkp}^-}_K^2+\lambda\nor{\LRp{\ukp}^-}_K^2
    +\nor{\LRp{\ukp}^-}_{1/\alpha,\pK}^2\\\nonumber
    &+\nor{\LRp{\sigbkp\cdot\n}^-}_{1/2\alpha,\pK}^2
    \leq\sum_\pK\frac{1}{2\varepsilon}\LRa{\LRc{\frac{\tau^-+1}{\alpha}}\LRp{\sigbk\cdot\n}^+,\LRp{\sigbk\cdot\n}^+}_\pK\\\nonumber
    &+\frac{1}{2\varepsilon}\LRa{\LRc{\frac{(1+\tau^-)(\betab\cdot\n^{+}+\tau^+)}{\alpha}}\LRp{\uk}^+,\LRp{\uk}^+}_\pK\\\nonumber
    &+\frac{\varepsilon}{2}\LRa{\LRc{\frac{1+\tau^++\betab\cdot\n^+}{\alpha}}\LRp{\sigbkp\cdot\n}^-,\LRp{\sigbkp\cdot\n}^-}_\pK\\\label{ErrorDDMCD7}
    &+\frac{\varepsilon}{2}\LRa{\LRc{\frac{\tau^-(1+\tau^++\betab\cdot\n^+)}{\alpha}}\LRp{\ukp}^-,\LRp{\ukp}^-}_\pK.
\end{align}
We can write the above inequality as
\begin{align}\nonumber
    &\kappa^{-1}\nor{\LRp{\sigbkp}^-}_K^2+\lambda\nor{\LRp{\ukp}^-}_K^2
    +\frac{1}{\bar\alpha}\nor{\LRp{\ukp}^-}_{\pK}^2
    +\frac{1}{2\bar\alpha}\nor{\LRp{\sigbkp\cdot\n}^-}_{\pK}^2\\\nonumber
    &\leq \frac{\bar\tau+1}{2\varepsilon\alpha_*}\nor{\LRp{\sigbk\cdot\n}^+}_\pK^2+\frac{(1+\bar\tau)(\|\betab\cdot\n\|_{L^{\infty}(\pK)}+\bar\tau)}{2\varepsilon\alpha_*}\nor{\LRp{\uk}^+}_\pK^2\\\nonumber   
    &+\frac{\varepsilon(1+\bar\tau+\|\betab\cdot\n\|_{L^{\infty}(\pK)})}{2\alpha_*}\nor{\LRp{\sigbkp\cdot\n}^-}_\pK^2\\\label{ErrorDDMCD8}
    &+\frac{\varepsilon\bar\tau(1+\bar\tau+\|\betab\cdot\n\|_{L^{\infty}(\pK)})}{2\alpha_*}\nor{\LRp{\ukp}^-}_\pK^2,
\end{align}
where $\bar\tau := \|\tau\|_{L^\infty(\pOmega_h)}$, $\bar\alpha := \|\alpha\|_{L^\infty(\pOmega_h)}$, and $\alpha_* := \inf\limits_{\pK \in \pOmega_h}\alpha$. 

By the inverse trace inequality \eqref{FEMTraceIn} we infer from \eqref{ErrorDDMCD8} that
\begin{multline}\nonumber
    \sum_\pK\mc{B}_1\nor{\LRp{\sigbkp\cdot\n}^-}_{\pK}^2+\mc{B}_2\nor{\LRp{\ukp}^-}_{\pK}^2\\
    \leq\sum_\pK\LRs{\mathcal{C}_1\nor{\LRp{\uk}^+}_{\pK}^2+\mathcal{C}_2\nor{\LRp{\sigbk\cdot\n}^+}_{\pK}^2},
\end{multline}
which implies
 \begin{eqnarray}\nonumber
    \nor{\LRp{\sigb^{k+1}\cdot\n,\ukp}}^2_\Gh \le \mc{D}\nor{\LRp{\sigb^{k}\cdot\n,\uk}}^2_\Gh,
\end{eqnarray}
where the constant $\mc{D}$ is computed as in \eqref{DefineD}.
Therefore 
\begin{eqnarray}\label{TraceExponentialDecay_CD}
    \nor{\LRp{\sigb^{k+1}\cdot\n,\ukp}}^2_\Gh \le \mc{D}^{k+1}\nor{\LRp{\sigb^{0}\cdot\n,\u^{0}}}^2_\Gh.
\end{eqnarray}
Inequalities \eqref{ErrorDDMCD8} and \eqref{TraceExponentialDecay_CD} imply
\begin{eqnarray}\nonumber
    \nor{\LRp{\sigb^{k+1},\ukp}}^2_{\Omega_h} \le (\mc{E}\mc{D}+\mc{F})\mc{D}^{k}\nor{\LRp{\sigb^{0}\cdot\n,\u^{0}}}^2_\Gh,
\end{eqnarray}
and this concludes the proof.

\end{proof}
\section*{References}

\bibliographystyle{elsarticle-num}

\bibliography{references,ceo}

\begin{thebibliography}{10}
\expandafter\ifx\csname url\endcsname\relax
  \def\url#1{\texttt{#1}}\fi
\expandafter\ifx\csname urlprefix\endcsname\relax\def\urlprefix{URL }\fi
\expandafter\ifx\csname href\endcsname\relax
  \def\href#1#2{#2} \def\path#1{#1}\fi

\bibitem{ReedHill73}
W.~H. Reed, T.~R. Hill, Triangular mesh methods for the neutron transport
  equation, Tech. Rep. LA-UR-73-479, Los Alamos Scientific Laboratory (1973).

\bibitem{LeSaintRaviart74}
P.~LeSaint, P.~A. Raviart, On a finite element method for solving the neutron
  transport equation, in: C.~de~Boor (Ed.), Mathematical Aspects of Finite
  Element Methods in Partial Differential Equations, Academic Press, 1974, pp.
  89--145.

\bibitem{JohnsonPitkaranta86}
C.~Johnson, J.~Pitk{\"a}ranta, An analysis of the discontinuous {G}alerkin
  method for a scalar hyperbolic equation, Mathematics of Computation 46~(173)
  (1986) 1--26.

\bibitem{douglas1976interior}
J.~Douglas, T.~Dupont, Interior penalty procedures for elliptic and parabolic
  galerkin methods, Computing methods in applied sciences (1976) 207--216.

\bibitem{wheeler1978elliptic}
M.~F. Wheeler, An elliptic collocation-finite element method with interior
  penalties, SIAM Journal on Numerical Analysis 15~(1) (1978) 152--161.

\bibitem{arnold1982interior}
D.~N. Arnold, An interior penalty finite element method with discontinuous
  elements, SIAM journal on numerical analysis 19~(4) (1982) 742--760.

\bibitem{CockburnKarniadakisShu00}
B.~Cockburn, G.~E. Karniadakis, C.-W. Shu, Discontinuous {G}alerkin Methods:
  Theory, Computation and Applications, Lecture Notes in Computational Science
  and Engineering, Vol.~11, Springer Verlag, Berlin, Heidelberg, New York,
  2000.

\bibitem{arnold2002unified}
D.~N. Arnold, F.~Brezzi, B.~Cockburn, L.~D. Marini, Unified analysis of
  discontinuous galerkin methods for elliptic problems, SIAM journal on
  numerical analysis 39~(5) (2002) 1749--1779.

\bibitem{CockburnGopalakrishnanLazarov09}
B.~Cockburn, J.~Gopalakrishnan, R.~Lazarov, Unified hybridization of
  discontinuous {G}alerkin, mixed, and continuous {G}alerkin methods for second
  order elliptic problems, SIAM J. Numer. Anal. 47 (2009) 1319--1365.

\bibitem{CockburnGopalakrishnanSayas10}
B.~Cockburn, J.~Gopalakrishnan, F.-J. Sayas, A projection-based error analysis
  of {HDG} methods, Mathematics Of Computation 79~(271) (2010) 1351--1367.

\bibitem{KirbySherwinCockburn12}
R.~M. Kirby, S.~J. Sherwin, B.~Cockburn, To {CG} or to {HDG}: A comparative
  study, J. Sci. Comput. 51 (2012) 183--212.

\bibitem{NguyenPeraireCockburn09a}
N.~C. Nguyen, J.~Peraire, B.~Cockburn, An implicit high-order hybridizable
  discontinuous {G}alerkin method for linear convection-diffusion equations,
  Journal Computational Physics 228 (2009) 3232--3254.

\bibitem{CockburnDongGuzmanEtAl09}
B.~Cockburn, B.~Dong, J.~Guzman, M.~Restelli, R.~Sacco, A hybridizable
  discontinuous {G}alerkin method for steady state
  convection-diffusion-reaction problems, SIAM J. Sci. Comput. 31 (2009)
  3827--3846.

\bibitem{EggerSchoberl10}
H.~Egger, J.~Schoberl, A hybrid mixed discontinuous {G}alerkin finite element
  method for convection-diffusion problems, IMA Journal of Numerical Analysis
  30 (2010) 1206--1234.

\bibitem{CockburnGopalakrishnan09}
B.~Cockburn, J.~Gopalakrishnan, The derivation of hybridizable discontinuous
  {G}alerkin methods for {S}tokes flow, SIAM J. Numer. Anal 47~(2) (2009)
  1092--1125.

\bibitem{NguyenPeraireCockburn10}
N.~C. Nguyen, J.~Peraire, B.~Cockburn, A hybridizable discontinous {G}alerkin
  method for {S}tokes flow, Comput Method Appl. Mech. Eng. 199 (2010) 582--597.

\bibitem{NguyenPeraireCockburn11}
N.~C. Nguyen, J.~Peraire, B.~Cockburn, An implicit high-order hybridizable
  discontinuous {G}alerkin method for the incompressible {N}avier-{S}tokes
  equations, Journal Computational Physics 230 (2011) 1147--1170.

\bibitem{MoroNguyenPeraire11}
D.~Moro, N.~C. Nguyen, J.~Peraire, Navier-{S}tokes solution using hybridizable
  discontinuous {G}alerkin methods, American Institute of Aeronautics and
  Astronautics 2011-3407.

\bibitem{NguyenPeraireCockburn11b}
N.~C. Nguyen, J.~Peraire, B.~Cockburn, Hybridizable discontinuous {G}alerkin
  method for the time harmonic {M}axwell's equations, Journal Computational
  Physics 230 (2011) 7151--7175.

\bibitem{LiLanteriPerrrussel13}
L.~Li, S.~Lanteri, R.~Perrrussel, A hybridizable discontinuous {G}alerkin
  method for solving {3D} time harmonic {Maxwell's} equations, in: Numerical
  Mathematics and Advanced Applications 2011, Springer, 2013, pp. 119--128.

\bibitem{NguyenPeraireCockburn11a}
N.~C. Nguyen, J.~Peraire, B.~Cockburn, High-order implicit hybridizable
  discontinuous {G}alerkin method for acoustics and elastodynamics, Journal
  Computational Physics 230 (2011) 3695--3718.

\bibitem{GriesmaierMonk11}
R.~Griesmaier, P.~Monk, Error analysis for a hybridizable discontinous
  {G}alerkin method for the {H}elmholtz equation, J. Sci. Comput. 49 (2011)
  291--310.

\bibitem{CuiZhang14}
J.~Cui, W.~Zhang, An analysis of {HDG} methods for the {H}elmholtz equation,
  IMA J. Numer. Anal. 34~(1) (2014) 279--295.

\bibitem{Bui-Thanh15}
T.~Bui-Thanh, From {G}odunov to a unified hybridized discontinuous {G}alerkin
  framework for partial differential equations, Journal of Computational
  Physics 295 (2015) 114--146.

\bibitem{Bui-Thanh15a}
T.~Bui-Thanh, From {R}ankine-{H}ugoniot Condition to a Constructive Derivation
  of {HDG} Methods, Lecture Notes in Computational Sciences and Engineering,
  Springer, 2015, pp. 483--491.

\bibitem{bui2016construction}
T.~Bui-Thanh, Construction and analysis of {HDG} methods for linearized shallow
  water equations, SIAM Journal on Scientific Computing 38~(6) (2016)
  A3696--A3719.

\bibitem{WangYe13}
J.~Wang, X.~Ye, A weak galerkin finite element method for second-order elliptic
  problems, J. Comput. Appl. Math. 241 (2013) 103--115.

\bibitem{WangYe14}
J.~Wang, X.~Ye, {A weak Galerkin mixed finite element method for second order
  elliptic problems}, Math. Comp. 83 (2014) 2101--2126.

\bibitem{ZhaiZhangWang15}
Q.~Zhai, R.~Zhang, X.~Wang, A hybridized weak galerkin finite element scheme
  for the stokes equations, Science China Mathematics (2015) 1--18.

\bibitem{MuWangYe14}
L.~Mu, J.~Wang, X.~Ye, A new weak galerkin finite element method for the
  helmholtz equation, IMA Journal of Numerical Analysis.

\bibitem{Brown10}
J.~Brown, Efficient nonlinear solvers for nodal high-order finite elements in
  3{D}, Journal of Scientific Computing 45~(1-3) (2010) 48--63.

\bibitem{crivellini2011implicit}
A.~Crivellini, F.~Bassi, An implicit matrix-free discontinuous galerkin solver
  for viscous and turbulent aerodynamic simulations, Computers \& Fluids 50~(1)
  (2011) 81--93.

\bibitem{knoll2004jacobian}
D.~A. Knoll, D.~E. Keyes, Jacobian-free newton--krylov methods: a survey of
  approaches and applications, Journal of Computational Physics 193~(2) (2004)
  357--397.

\bibitem{Lions:1987:OSA}
P.-L. Lions, On the {S}chwarz alternating method. {I}, in: First
  {I}nternational {S}ymposium on {D}omain {D}ecomposition {M}ethods for
  {P}artial {D}ifferential {E}quations ({P}aris, 1987), SIAM, Philadelphia, PA,
  1988, pp. 1--42.

\bibitem{Lions:1989:OSA}
P.-L. Lions, On the {S}chwarz alternating method. {II}. {S}tochastic
  interpretation and order properties, in: Domain decomposition methods ({L}os
  {A}ngeles, {CA}, 1988), SIAM, Philadelphia, PA, 1989, pp. 47--70.

\bibitem{Lions:1990:OSA}
P.-L. Lions, On the {S}chwarz alternating method. {III}.\ {A} variant for
  nonoverlapping subdomains, in: Third {I}nternational {S}ymposium on {D}omain
  {D}ecomposition {M}ethods for {P}artial {D}ifferential {E}quations
  ({H}ouston, {TX}, 1989), SIAM, Philadelphia, PA, 1990, pp. 202--223.

\bibitem{Halpern:OSM:2009}
L.~Halpern, Optimized {S}chwarz waveform relaxation: roots, blossoms and
  fruits, in: Domain decomposition methods in science and engineering {XVIII},
  Vol.~70 of Lect. Notes Comput. Sci. Eng., Springer, Berlin, 2009, pp.
  225--232.

\bibitem{GanderGouarinHalpern:2011:OSW}
M.~J. Gander, L.~Gouarin, L.~Halpern, Optimized {S}chwarz waveform relaxation
  methods: a large scale numerical study, in: Domain decomposition methods in
  science and engineering {XIX}, Vol.~78 of Lect. Notes Comput. Sci. Eng.,
  Springer, Heidelberg, 2011, pp. 261--268.

\bibitem{GanderHajian:2015:ASM}
M.~J. Gander, S.~Hajian, Analysis of {S}chwarz methods for a hybridizable
  discontinuous {G}alerkin discretization, SIAM J. Numer. Anal. 53~(1) (2015)
  573--597.

\bibitem{Binh1}
M.-B. Tran, Parallel {S}chwarz waveform relaxation method for a semilinear heat
  equation in a cylindrical domain, C. R. Math. Acad. Sci. Paris 348~(13-14)
  (2010) 795--799.

\bibitem{tran2011parallel}
M.-B. Tran, A parallel four step domain decomposition scheme for coupled
  forward--backward stochastic differential equations, Journal de
  math{\'e}matiques pures et appliqu{\'e}es 96~(4) (2011) 377--394.

\bibitem{tran2013overlapping}
M.-B. Tran, Overlapping domain decomposition: Convergence proofs, in: Domain
  Decomposition Methods in Science and Engineering XX, Springer, 2013, pp.
  493--500.

\bibitem{tran2014behavior}
M.-B. Tran, The behavior of domain decomposition methods when the overlapping
  length is large, Central European Journal of Mathematics 12~(10) (2014)
  1602--1614.

\bibitem{CockburnDuboisGopalakrishnanEtAl13}
B.~Cockburn, O.~Dubois, J.~Gopalakrishnan, S.~Tan, Multigrid for an {HDG}
  method, IMA Journal of Numerical Analysis (2013) 1--40.

\bibitem{iHDG}
S.~Muralikrishnan, M.-B. Tran, T.~Bui-Thanh, i{HDG}: An iterative {HDG}
  framework for partial differential equations, SIAM Journal on Scientific
  Computing 39~(5) (2017) S782--S808.

\bibitem{gander2014block}
M.~J. Gander, S.~Hajian, Block jacobi for discontinuous galerkin
  discretizations: no ordinary schwarz methods, in: Domain Decomposition
  Methods in Science and Engineering XXI, Springer, 2014, pp. 305--313.

\bibitem{gander2016analysis}
M.~J. Gander, S.~Hajian, Analysis of schwarz methods for a hybridizable
  discontinuous galerkin discretization: the many subdomain case, arXiv
  preprint arXiv:1603.04073.

\bibitem{Friedrichs58}
K.~O. Friedrichs, Symmetric positive linear differential equations,
  Communications on pure and applied mathematics XI (1958) 333--418.

\bibitem{gander2008analysis}
M.~J. Gander, Analysis of the parareal algorithm applied to hyperbolic problems
  using characteristics, Boletin de la Sociedad Espanola de Matem{\'a}tica
  Aplicada 42 (2008) 21--35.

\bibitem{kaczkowski1975method}
Z.~Kaczkowski, The method of finite space-time elements in dynamics of
  structures, Journal of Technical Physics 16~(1) (1975) 69--84.

\bibitem{argyris1969finite}
J.~Argyris, D.~Scharpf, Finite elements in time and space, Nuclear Engineering
  and Design 10~(4) (1969) 456--464.

\bibitem{oden1969general}
J.~T. Oden, A general theory of finite elements. ii. applications,
  International Journal for Numerical Methods in Engineering 1~(3) (1969)
  247--259.

\bibitem{klaij2006space}
C.~M. Klaij, J.~J. van~der Vegt, H.~van~der Ven, Space--time discontinuous
  {G}alerkin method for the compressible {N}avier--{S}tokes equations, Journal
  of Computational Physics 217~(2) (2006) 589--611.

\bibitem{ellis2016robust}
T.~Ellis, J.~Chan, L.~Demkowicz, Robust {DPG} methods for transient
  convection-diffusion, in: Building Bridges: Connections and Challenges in
  Modern Approaches to Numerical Partial Differential Equations, Springer,
  2016, pp. 179--203.

\bibitem{rhebergen2012space}
S.~Rhebergen, B.~Cockburn, A space--time hybridizable discontinuous galerkin
  method for incompressible flows on deforming domains, Journal of
  Computational Physics 231~(11) (2012) 4185--4204.

\bibitem{lions2001resolution}
J.-L. Lions, Y.~Maday, G.~Turinici, R{\'e}solution d'edp par un sch{\'e}ma en
  temps parar{\'e}el, Comptes Rendus de l'Acad{\'e}mie des Sciences-Series
  I-Mathematics 332~(7) (2001) 661--668.

\bibitem{garrido2006convergent}
I.~Garrido, B.~Lee, G.~Fladmark, M.~Espedal, Convergent iterative schemes for
  time parallelization, Mathematics of computation 75~(255) (2006) 1403--1428.

\bibitem{farhat2003time}
C.~Farhat, M.~Chandesris, Time-decomposed parallel time-integrators: theory and
  feasibility studies for uid, structure, and fluid-structure applications,
  International Journal for Numerical Methods in Engineering 58~(9) (2003)
  1397--1434.

\bibitem{maday2002parareal}
Y.~Maday, G.~Turinici, A parareal in time procedure for the control of partial
  differential equations, Comptes Rendus Mathematique 335~(4) (2002) 387--392.

\bibitem{minion2011hybrid}
M.~Minion, A hybrid parareal spectral deferred corrections method,
  Communications in Applied Mathematics and Computational Science 5~(2) (2011)
  265--301.

\bibitem{gander2007analysis}
M.~J. Gander, S.~Vandewalle, Analysis of the parareal time-parallel
  time-integration method, SIAM Journal on Scientific Computing 29~(2) (2007)
  556--578.

\bibitem{GiraldoWarburton08}
F.~X. Giraldo, T.~Warburton, A high-order triangular discontinous {G}alerkin
  oceanic shallow water model, International Journal For Numerical Methods In
  Fluids 56 (2008) 899--925.

\bibitem{chan2016gpu}
J.~Chan, Z.~Wang, A.~Modave, J.-F. Remacle, T.~Warburton, {GPU}-accelerated
  discontinuous galerkin methods on hybrid meshes, Journal of Computational
  Physics 318 (2016) 142--168.

\bibitem{MR1986022}
T.~Warburton, J.~S. Hesthaven, On the constants in {$hp$}-finite element trace
  inverse inequalities, Comput. Methods Appl. Mech. Engrg. 192~(25) (2003)
  2765--2773.

\bibitem{TAYLOR199792}
M.~Taylor, J.~Tribbia, M.~Iskandarani, The spectral element method for the
  shallow water equations on the sphere, Journal of Computational Physics
  130~(1) (1997) 92 -- 108.

\bibitem{WilcoxStadlerBursteddeEtAl10}
L.~C. Wilcox, G.~Stadler, C.~Burstedde, O.~Ghattas, A high-order discontinuous
  {G}alerkin method for wave propagation through coupled elastic-acoustic
  media, Journal of Computational Physics 229~(24) (2010) 9373--9396.

\bibitem{BursteddeGhattasGurnisEtAl10}
C.~Burstedde, O.~Ghattas, M.~Gurnis, T.~Isaac, G.~Stadler, T.~Warburton, L.~C.
  Wilcox, Extreme-scale {AMR}, in: SC10: Proceedings of the International
  Conference for High Performance Computing, Networking, Storage and Analysis,
  ACM/IEEE, 2010.

\bibitem{BursteddeGhattasGurnisEtAl08}
C.~Burstedde, O.~Ghattas, M.~Gurnis, E.~Tan, T.~Tu, G.~Stadler, L.~C. Wilcox,
  S.~Zhong, Scalable adaptive mantle convection simulation on petascale
  supercomputers, in: SC08: Proceedings of the International Conference for
  High Performance Computing, Networking, Storage and Analysis, ACM/IEEE, 2008.

\end{thebibliography}

\end{document}